\documentclass[12pt,a4paper,oneside]{amsart}
\usepackage{amsmath,amssymb,amsthm,hyperref}
\usepackage{enumerate}
\usepackage{amsfonts}
\usepackage{cancel}
\usepackage{t1enc}
\usepackage{cleveref}
\usepackage{graphicx}
\usepackage{mathrsfs}
\usepackage{mathptmx}
\usepackage{lipsum}
\usepackage{tikz}
\usepackage{version} 
\usepackage{color}
\usepackage{datetime}
\numberwithin{equation}{section}
\usepackage[bf]{caption}     
\usepackage{geometry}        
\usepackage{bm}  
\usepackage{buczosierlyukas}
\usepackage{mathtools}
\usepackage[utf8]{inputenc}
\usepackage[normalem]{ulem}

\newtheorem{theorem}{Theorem}[section]

\newtheorem{lemma}[theorem]{Lemma}

\newtheorem{corollary}[theorem]{Corollary}
\DeclareMathAlphabet{\mathpzc}{OT1}{pzc}{m}{it}
{\theoremstyle{definition}
\newtheorem{definition}[theorem]{Definition}
\newtheorem{remark}[theorem]{Remark}

\newtheorem{question}[theorem]{Question}

}

\newcommand{\Id}{\mathrm{Id}}

\newcommand{\dub}{D_{\overline{B}*}}

\definecolor{aquam}{rgb}{0.5,1.0,1.0}
\definecolor{bbrown}{rgb}{0.75,0.38,0.15}
\definecolor{Cyan}{rgb}{0,0.6,0.6}
\definecolor{Darkblue}{rgb}{0,0,1}
\definecolor{Dodgerblue2}{rgb}{0,0.5,1}
\definecolor{Green}{rgb}{0,0.6,0.06}
\definecolor{Kahki}{rgb}{1,1,0.5}
\definecolor{Magenta}{rgb}{0.7,0,0.7}
\definecolor{bMagenta}{rgb}{1,.6,1}
\definecolor{Orange}{rgb}{0.8,0.3,0}
\definecolor{dOrchid}{rgb}{0.7,0.2,0.4}
\definecolor{Orchid}{rgb}{1,0.5,1}
\definecolor{Purple}{rgb}{0.65,0.07,0.85}
\definecolor{Royalblue}{rgb}{0.6,0.85,0.87}
\definecolor{Tan}{rgb}{0.54,0.42,0.23}
\definecolor{bTan}{rgb}{0.94,0.82,0.63}
\definecolor{zoltan}{rgb}{0,0.1,0.3}
\definecolor{Turquoise}{rgb}{0,0.85,0.87}
\definecolor{Yellow}{rgb}{1,1,0}
\definecolor{darkamber}{rgb}{0.4,0.19,0.28}
\definecolor{bYellow}{rgb}{1,1,0.6}
\definecolor{bRed}{rgb}{1,0.7,0.7}
\definecolor{boxcolb}{rgb}{0.87,0.77,0.75}
\definecolor{boxcol}{rgb}{0.6,0.85,0.87}
\definecolor{boxcolgreen}{rgb}{0.64,0.93,0.79}
\definecolor{boxcolaa}{rgb}{.75,.99,.70}
\definecolor{boxcolbb}{rgb}{0.39,0.50,0.56}
\definecolor{boxcolcc}{rgb}{1,0.81,0.65}
\definecolor{yy}{rgb}{0.43,0.21,.18}
\definecolor{gA}{gray}{0.5}
\definecolor{gB}{gray}{0.8}
\definecolor{gC}{gray}{0.9}

\usepackage{xcolor}

\hypersetup{
  colorlinks   = true, 
  urlcolor     = blue, 
  linkcolor    = blue, 
  citecolor   = red 
}

\begin{document}

\title{Exact formula on upper box dimension of generic H\"older level sets}
\author{Zolt\'an Buczolich$^*$}
\address[Z. Buczolich]{Department of Analysis, ELTE E\"otv\"os Lor\'and University, 
P\'azm\'any P\'eter S\'et\'any 1/c, 1117 Budapest, Hungary}
\address[B. Maga]{Alfr\'ed R\'enyi Institute of Mathematics, Re\'altanoda street 13-15, 1053 Budapest, Hungary}
\email{zoltan.buczolich@ttk.elte.hu}
\urladdr{http://buczo.web.elte.hu, ORCID Id: 0000-0001-5481-8797}

\author{Bal\'azs Maga$^\text{\textdagger}$}
\email{mbalazs0701@gmail.com}
\urladdr{  http://magab.web.elte.hu/}


\thanks{\scriptsize $^\text{\textdagger}$ This author was supported by the 
Hungarian National Research, Development and Innovation Office-NKFIH, Grant no. 152535 and no. 152922.
 \newline\indent {\it Mathematics Subject
Classification:} Primary :   28A78,  Secondary :  26B35, 28A80. 
\newline\indent {\it Keywords:}   H\"older continuous function, box dimension, level set, genericity, self-similar sets.}

\date{\today}

\begin{abstract}
    In the previous decades, the size of level sets of functions have been extensively studied in various setups involving different regularity properties and
    size notions. In the case of Hölder functions, the authors have provided various bounds, but to date no explicit formulae have been found for any studied dimension 
    and the results were valid only about very specific fractals.
    In this paper, for the first time, we have a result valid for a large class of self-similar sets, namely we prove that for these 
    fractals Lebesgue almost every level set of the generic 1-Hölder-$\alpha$ function defined on $F\subseteq \mathbb{R}^p$ has 
    upper box dimension $\dim_H F - \alpha$. 
\end{abstract}

\maketitle

\setcounter{tocdepth}{3}

\tableofcontents


\section{Introduction}

Level sets of continuous functions defined on some domain $F\subseteq \mathbb{R}^p$ convey information about the dimensionality of $F$. Roughly speaking, a generic continuous function should have large level sets
if $F$ itself is thick in an intuitive sense, and small otherwise. This heuristic prompted the investigation of the size of such level sets in terms of Hausdorff dimension
(see \cite{BBElevel, BBEtoph, BK}), with the quantity providing the answer (the topological Hausdorff dimension) attracting interest in its own right \cite{MaZha}, even in physics
\cite{Balankintoph, Balankinfracspace, Balankintransport, Balankintoph2, Balankinfluid}.

The authors with Gáspár Vértesy initiated the study of the Hausdorff and box dimension of level sets of generic 1-Hölder-$\alpha$ functions \cite{ BuczolichMagaVertesy2025, sierc, sier, BUCZOLICH2024531}. 
Under self-similarity conditions on the domain $F$, they proved that for every $\alpha$ and for either of the Hausdorff/lower box/upper box dimension, there is a constant which is the dimension
of Lebesgue almost every level set of the generic 1-Hölder-$\alpha$ function. For any underlying dimension, these values form an interesting spectrum describing the geometry of $F$. 
However, dealing with this spectrum is far from trivial in any of the cases: the existing literature focuses on particular choices of $F$ and provides mostly estimates at the price of
a significant amount of technicalities, highly depending on the structure of $F$. In this direction, in \cite{BuczolichMagaVertesy2025}, 
we managed to provide lower and upper bounds on the Hausdorff spectrum for the Sierpiński triangle which asymptotically coincide as $\alpha\to 0+$.
However, no explicit formulae or general machinery have been found before for describing any of the spectra for a larger class of fractals. In this respect this paper is the first one. 

\subsection{The main result}

We give an explicit formula for the upper box dimension $\overline{\dim}_B$ of Lebesgue almost every level set of a generic 1-Hölder-$\alpha$ function 
valid for connected self-similar sets subject to the geometric condition of having {\it finitely many directions}, i.e.,
whose defining similarities have orthogonal parts generating a finite subgroup of the orthogonal group.

\begin{theorem} \label{thm:gen_upper_box_precise}
    Assume that $F\subseteq \mathbb{R}^p$ is a connected self-similar set with finitely many directions
    and $\dim_H F = s > 1$.
    Then there exists a dense $G_\delta$ set $\mathcal{G}$ of 1-Hölder-$\alpha$ functions from $F$ to $\mathbb{R}$ such
    that for every $f\in \mathcal{G}$ and Lebesgue almost every $r\in f(F)$,
    \begin{displaymath}
    \overline{\dim}_B f^{-1}(r)=s-\alpha.
    \end{displaymath}
\end{theorem}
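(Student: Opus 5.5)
The plan splits into an upper bound valid for every 1-Hölder-$\alpha$ function and a lower bound achieved generically, with a residual-set argument gluing them together.

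\textbf{Upper bound.} I will show that for every 1-Hölder-$\alpha$ function $f$ and Lebesgue almost every $r$, $\overline{\dim}_B f^{-1}(r)\le s-\alpha$; no genericity is needed. Fix a level $n$ of the self-similar construction with contraction ratios comparable to $\delta$. There are about $\delta^{-s}$ cylinders $F_i$ of diameter about $\delta$. On each, $f(F_i)$ is an interval of length at most $C\delta^\alpha$. Let $N_\delta(r)$ be the number of cylinders whose image contains $r$; this bounds the $\delta$-covering number of $f^{-1}(r)$ up to constants. Then
\begin{displaymath}
\int N_\delta(r)\,dr \le \sum_i |f(F_i)|\lesssim \delta^{-s}\delta^{\alpha}.
\end{displaymath}
Along the geometric sequence $\delta_k=2^{-k}$, Markov's inequality gives $\lambda\{r: N_{\delta_k}(r)>\delta_k^{-(s-\alpha+\varepsilon)}\}\lesssim \delta_k^{\varepsilon}$, which is summable. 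By Borel–Cantelli, almost every $r$ satisfies $N_{\delta_k}(r)\le\delta_k^{-(s-\alpha+\varepsilon)}$ eventually. Monotonicity between consecutive scales and $\varepsilon\to0$ then yield the upper bound.

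\textbf{Lower bound, building blocks.} I will build a countable dense family of ``model'' functions and construct $\mathcal{G}$ as $\bigcap_m\bigcup_{g}B(g,\rho_{g,m})$, where the union runs over models $g$ in the family at scale $m$. It suffices to arrange that, for $f$ in the $m$-th open set, a set of $r$ of measure at least $(1-1/m)\lambda(f(F))$ has $N_{\delta}(r)\ge \delta^{-(s-\alpha-1/m)}$ for some $\delta<1/m$. Upper box dimension is a limsup, so one good scale per $m$ suffices, and a limsup over $m$ of such sets of $r$ has full measure. To get the models, start from a dense set of functions that are locally ``piecewise affine'' on cylinders; density in the Hölder space needs the usual little-Hölder approximation, which I will take from the earlier papers. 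On each small cylinder $F_i$ at level $n$, add a saw-tooth perturbation at a deep level $n+k$. Such a perturbation oscillates with amplitude $\sim\delta_{n+k}^{\alpha}$ and has Hölder norm close to the original.

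\textbf{Lower bound, the key estimate.} If the base function has values in a window of length $\sim\delta_{n}^{\alpha}$ on $F_i$, then each $r$ in that window should meet about $\#\{\text{level-}(n+k)\text{ subcylinders of }F_i\}\cdot\delta_{n+k}^{\alpha}/\delta_n^{\alpha}$ of them. This equals $(\delta_{n+k}/\delta_n)^{-s+\alpha}$, hence roughly $\delta^{-(s-\alpha)}$ pieces overall at scale $\delta_{n+k}$. The estimate should hold robustly, so it survives a small open perturbation of radius $\rho_{g,m}$. Making the saw-tooth genuinely spread the values uniformly is where I use the hypotheses. Connectedness makes each cylinder's image an interval, and the oscillating pattern crosses every intermediate value. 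Finitely many directions let me choose the perturbation as a fixed linear-type function composed with the similarities, say a function of $\langle x,v\rangle$. With only finitely many rotated copies, one uniform pattern works on all cylinders with controlled Hölder constant. The condition $s>1$ ensures $s-\alpha>0$, so that $f^{-1}(r)$ is nonempty and large for almost every $r$ in the image.

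\textbf{Main obstacle.} I expect the hardest step to be the uniform crossing-count estimate: the perturbed function should hit each $r$ in a large-measure set of values in about $\delta^{-(s-\alpha)}$ distinct $\delta$-cylinders, stably under perturbation. At the same time the global 1-Hölder-$\alpha$ constraint must hold across cylinder boundaries, where neighbouring cylinders in a connected self-similar set may touch at arbitrary configurations. I would first try a weighted-level-count argument: attach to each subcylinder the $\alpha$-power of its diameter, use the Moran equation $\sum r_j^s=1$ to control counts, and use finite directions to reduce the boundary compatibility to finitely many cases.
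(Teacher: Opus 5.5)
\textbf{The gap is in your lower-bound ``key estimate''.} The number $\#\{\text{subcylinders}\}\cdot\delta_{n+k}^{\alpha}/\delta_n^{\alpha}$ is only the \emph{average} over $r$ of the crossing count: the total length of the images divided by the window length. That is exactly the quantity that gives the upper bound via Markov's inequality. It gives no lower bound on a large-measure set of $r$, because the images of the subcylinders may pile up on a small set of values. Connectedness only makes each image an interval, and the Moran equation only controls the total number of cylinders, not how their images are distributed. So the weighted-count argument you propose cannot close this step.

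What is needed is a slicing theorem. Suppose that on a piece the base is affine and the perturbation is a function of one linear form. Then $\{g=r\}$ there is a union of parallel hyperplane slices $F\cap W_a$, with $a$ ranging over suitably separated preimages of $r$, and the box count is essentially $\sum_a N_\delta(F\cap W_a)$.
\begin{itemize}
\item You need $\dim_B(F\cap W_a)=s-1$ for almost every $a$ in the projection. Neither an average statement nor Marstrand's positive-measure-of-slices statement is enough. The required result is Theorem \ref{prop:wenxi} (Wen--Xi), and this is where finitely many directions and $s>1$ are really used. They are not needed for H\"older compatibility or for nonemptiness of level sets.
\item That theorem holds only for typical orientations. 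So you must also perturb the gradients of all affine pieces so that they are typically oriented for every similar copy of $F$.
\item You must ensure that almost every value is attained in the interior of a piece (Lemma \ref{lemma:values_admitted_in_simplices}). Only then do level sets contain genuine slices of a small copy of $F$.
\end{itemize}
With these in place, the count is about $n^k$ separated slices times $(nm)^{k(s-1)}$ boxes each, i.e.\ $(nm)^{k(s-\alpha_{n,m})}$, as in Lemma \ref{lemma:approx_affine_functions}.

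\textbf{Two further points.}
\begin{itemize}
\item The boundary-compatibility problem you anticipate comes only from building the saw-tooth cylinder by cylinder. The paper avoids it by post-composing the $1^{-}$-H\"older piecewise affine function with a single one-dimensional function $\phi_{n,m}$ of exponent $\alpha_{n,m}\in(\alpha+\varepsilon/2,\alpha+\varepsilon)$. The global H\"older bound then splits into two elementary regimes: a Lipschitz base for small $|x-y|$, and $\|\phi_{n,m}-\mathrm{Id}\|_\infty\le 1/m$ for large $|x-y|$.
\item No separation condition is assumed. The number of cylinders of diameter comparable to $\delta$ can therefore grow like $\delta^{-s'}$, where $s'$ is the similarity dimension and may exceed $s$, and $\sum r_j^s=1$ can fail. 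In your upper bound, replace cylinders by a minimal $\delta$-cover of $F$ and use $\overline{\dim}_B F=\dim_H F$ for self-similar sets (Falconer); that part is then fine.
\end{itemize}
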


\subsection{Organization of the paper}

In Section \ref{sec:prelim} we introduce some notation and recall some relevant tools. 

In Section \ref{sec:box_dimension_locally_connected} we provide a lemma on how a dense set of functions over which the upper box dimension of level sets is controlled can be used
to get bounds over a dense $G_\delta$ set of functions. Such a lemma was previously provided by \cite{BUCZOLICH2024531} assuming a more elaborate connectivity property,
this time it will be sufficient to assume local connectivity.

In Section \ref{sec:lemmas} we prove two key lemmas, which do the heavy lifting for Section \ref{sec:main}, in which we prove Theorem \ref{thm:gen_upper_box_precise}. 
We note that in \cite[Theorem~3.6]{BUCZOLICH2024531}, we proved a lower bound for the upper box dimension of level sets under certain conditions using Theorem \ref{prop:wenxi} (identical to
\cite[Proposition~2.6]{BUCZOLICH2024531}). A subtle point in applying this statement was not addressed explicitly there.
Lemma \ref{lemma:values_admitted_in_simplices} fills this important geometric gap, essentially stating
that if we have a piecewise affine function, defined on simplices containing our fractal $F$ then with an arbitrarily small translation
we can make sure that the function takes Lebesgue almost all of its values in the interior of the simplices. This fact is quite easy if the Hausdorff dimension of $F$ is less than $2$,
for higher dimensions it needs some extra care.

We conclude the paper with some open problems.

\section{Preliminaries} \label{sec:prelim}

\subsection{Notation}

For $F\subseteq \mathbb{R}^p$, $C_c^\alpha(F)$ denotes the set of $c$-Hölder-$\alpha$ functions, i.e., the functions satisfying $|f(x)-f(y)|\leq c|x-y|^{\alpha}$ for all $x, y\in F$. We also use the notation
$C_{c-}^{\alpha}(F)=\bigcup_{c'<c}C_{c'}^\alpha(F)$. The term \emph{almost every} is always understood in Lebesgue sense.

Given a function $f:F\to \mathbb{R}$, we put $D_{\overline{B}}^f(r,F)=\overline{\dim}_B f^{-1}(r)$. For $f:F\to \mathbb{R}$, we denote by $D_{\overline{B}*}^f(F)$  the essential infimum
of the upper box dimension of non-empty level sets, i.e., 
\begin{displaymath}
    D_{\overline{B}*}^f(F)= \begin{cases}
        \inf\{d: \lambda\{r : r\in f(F) \text{ and } D_{\overline{B}}^f(r,F)\leq{d}\}>0\}, & \text{if } \lambda(f(F))>0, \\
        0, & \text{if } \lambda(f(F))=0.
    \end{cases}
\end{displaymath}
We denote by $\mg_{1,\aaa}(F)$, or by simply $\mg_{1,\aaa}$ the  set  of dense $G_{\ddd}$
sets in $C_{1}^{\aaa}(F)$. Whether $D_{\overline{B}*}^f(F)$ has a generic value is a priori unclear, nevertheless the following quantity is always well-defined:
\begin{equation}\label{def:generic_upper}
    D_{\overline{B}*}(\aaa,F)=\inf_{\cag\in \mg_{1,\aaa}}\sup\{ D_{\overline{B}*}^{f}(F):f\in \cag \}.
 \end{equation}
As established by \cite[Theorem~3.8]{BUCZOLICH2024531}, subject to certain conditions on the connectivity properties of $F$, $D_{\overline{B}*}(\aaa,F)$ is indeed simply the generic value of $D_{\overline{B}^*}^f(F)$.
In Section \ref{sec:box_dimension_locally_connected} we cite these results and discuss how they extend to locally connected $F$. 
We note that even if $D_{\overline{B}*}(\aaa,F)$ is the generic value of $D_{\overline{B}^*}^f(F)$, it does not necessarily coincide with $D_{\overline{B}}^f(r,F)$ for almost every $r\in f(F)$.
In particular it is not clear a priori why in the setup of Theorem \ref{thm:gen_upper_box_precise} almost every level set of the generic 1-Hölder-$\alpha$ should have the same upper box dimension.

\subsection{Earlier results concerning box dimension of level sets}  \label{subsec:defs}

The following lemma is provided by \cite{BUCZOLICH2024531}, quickly giving rise to an upper bound on the upper box dimension of level sets.

\begin{lemma}[{\cite[Lemma~3.9]{BUCZOLICH2024531}}] \label{lemma:from_covering_to_box}
    Assume that $0<\alpha\leq 1$, and the measurable set $F\subseteq \mathbb{R}^p$ has coverings $(\mathcal{S}_n)_{n=1}^{\infty}$, 
    such that with some constants $C,l,\rho>1$:
    \begin{itemize}
        \item the cardinality of $\mathcal{S}_n$ is at most $Cl^n$ for some $C,l>1$,
        \item if $S\in \mathcal{S}_n$, then $\diam(S)\leq C\rho^{-n}$.
    \end{itemize}
    Then for any $f\in C_{\alpha}^{1}(F)$ and almost every $r\in \mathbb{R}$, we have
    $\overline{\dim}_B(f^{-1}(r))\leq \frac{\log l}{\log \rho} - \alpha$. In particular,
    $D_{\overline{B}*}(\alpha, F)\leq \frac{\log l}{\log \rho} - \alpha$.
\end{lemma}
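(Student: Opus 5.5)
We prove the pointwise bound for a fixed $f\in C_1^\alpha(F)$ (the statement writes $C^1_\alpha$, meaning 1-Hölder-$\alpha$) by a counting argument via Fubini/Chebyshev, and then take a Borel–Cantelli type limit along the scales $\rho^{-n}$. For each $n$ and each level $r$, let $N_n(r)$ be the number of sets $S\in\mathcal{S}_n$ with $S\cap f^{-1}(r)\neq\emptyset$. These sets cover $f^{-1}(r)$ and each has diameter at most $C\rho^{-n}$. Hence the covering number of $f^{-1}(r)$ at scale $C\rho^{-n}$ is at most $N_n(r)$. Since consecutive scales differ by the fixed factor $\rho$, the upper box dimension can be computed along $\delta_n=C\rho^{-n}$:
\[
\overline{\dim}_B f^{-1}(r)\le \limsup_{n\to\infty}\frac{\log N_n(r)}{n\log\rho}.
\]

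The key estimate is an integral bound on $N_n$. We have
\[
\int_{\mathbb{R}} N_n(r)\,dr=\sum_{S\in\mathcal{S}_n}\lambda\bigl(f(S\cap F)\bigr).
\]
Because $f$ is 1-Hölder-$\alpha$, $f(S\cap F)$ lies in an interval of length at most $\diam(S)^\alpha\le C^\alpha\rho^{-n\alpha}$. It follows that
\[
\int N_n\,d\lambda\le C^{1+\alpha}\,l^n\rho^{-n\alpha}.
\]
Measurability of $N_n$ is the only technical point. If the sets $S$ are not closed, I would replace each by its closure. This does not change diameters, and it only enlarges $N_n$, so it is harmless for an upper bound. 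Then $\{r: S\cap F\cap f^{-1}(r)\ne\emptyset\}$ is the image of a set under a continuous map. Using $\sigma$-compactness, or simply outer measure with a measurable majorant, suffices for Chebyshev.

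Next fix $\varepsilon>0$ and apply Chebyshev:
\[
\lambda\{r: N_n(r)>l^n\rho^{-n\alpha}\rho^{n\varepsilon}\}\le C^{1+\alpha}\rho^{-n\varepsilon}.
\]
This is summable in $n$. By Borel–Cantelli, for almost every $r$ we eventually have $N_n(r)\le l^n\rho^{n(\varepsilon-\alpha)}$. Therefore
\[
\overline{\dim}_B f^{-1}(r)\le \frac{\log l}{\log\rho}-\alpha+\varepsilon.
\]
Letting $\varepsilon\to0$ along a countable sequence gives the bound for almost every $r$. Levels with $f^{-1}(r)=\emptyset$ are trivial.

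Finally, the statement about $D_{\overline{B}*}(\alpha,F)$ follows directly from the definition. The bound holds for every $f\in C_1^\alpha(F)$, so it holds for the whole space $C_1^\alpha(F)$, which is trivially a dense $G_\delta$. The inner supremum for this set is therefore at most $\frac{\log l}{\log\rho}-\alpha$. In more detail, if $\lambda(f(F))>0$, the set of $r$ with $D^f_{\overline{B}}(r,F)\le \frac{\log l}{\log\rho}-\alpha$ has full, hence positive, measure in $f(F)$, so $D_{\overline{B}*}^f(F)$ is at most that value. If $\lambda(f(F))=0$, then $D_{\overline{B}*}^f(F)=0$. The infimum over $\mathcal{G}\in\mathcal{G}_{1,\alpha}$ can only be smaller. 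The main obstacle I expect is only the measurability bookkeeping in the integral identity; the rest is routine.
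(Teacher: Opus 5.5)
Your proof is correct, and it is the natural counting argument. The present paper only quotes \cite[Lemma~3.9]{BUCZOLICH2024531} without proof; that argument bounds $\int N_n\,d\lambda$ by $\sum_{S\in\mathcal{S}_n}\mathrm{diam}(S)^\alpha\le C^{1+\alpha}l^n\rho^{-n\alpha}$ and then applies Chebyshev and Borel--Cantelli along the geometric scales $C\rho^{-n}$.

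Two small remarks. First, for merely Lebesgue measurable $F$ the sets $f(S\cap F)$ need not be Lebesgue measurable, so the $\sigma$-compactness route is not available in general. Rely instead on your majorant: replace $f(S\cap F)$ by the closed interval it spans, which has length at most $\mathrm{diam}(S)^\alpha$, so the counting function is dominated by a Borel simple function. Second, your last paragraph tacitly uses $\frac{\log l}{\log\rho}-\alpha\ge 0$, because $D_{\overline{B}*}^f(F)=0$ whenever $\lambda(f(F))=0$. This degenerate case is a defect of the statement itself, not of your proof.
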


Lemma \ref{lemma:from_covering_to_box} has the following immediate corollary, only noted in the special case of $F$ being the Sierpi\'nski triangle in \cite{BUCZOLICH2024531}:

\begin{corollary} \label{corollary:lower_box_dim_upper_bound}
    Assume that $0<\alpha\leq 1$, and let $F\subseteq \mathbb{R}^p$ be measurable. Then for every $f\in C_{\alpha}^{1}(F)$ and almost every $r\in \mathbb{R}$, we have
    $\overline{\dim}_B(f^{-1}(r))\leq \underline{\dim}_B(F) - \alpha$. In particular,
    $D_{\overline{B}*}(\alpha, F)\leq \underline{\dim}_B(F) - \alpha$.
\end{corollary}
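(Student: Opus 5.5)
The plan is to reduce the statement to Lemma~\ref{lemma:from_covering_to_box}, or more precisely to the averaging argument behind it, with coverings extracted from the definition of $\underline{\dim}_B(F)$. Put $d=\underline{\dim}_B(F)$; we may assume $d<\infty$ and $d\ge\alpha$, since otherwise there is nothing to prove. Fix $\varepsilon>0$ and $\rho>1$, and set $l=\rho^{d+\varepsilon}$. By the definition of the lower box dimension there are infinitely many $n$ with $N_{\rho^{-n}}(F)\le l^n$. Call this infinite set of indices $\mathcal N$. For $n\in\mathcal N$ let $\mathcal S_n$ be a family of at most $l^n$ sets of diameter at most $\rho^{-n}$ covering $F$.

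Next I would run the Fubini/Fatou argument that underlies Lemma~\ref{lemma:from_covering_to_box}. Take $f\in C_1^\alpha(F)$ and $S\in\mathcal S_n$. Then $f(S\cap F)$ lies in an interval of length at most $\rho^{-n\alpha}$. Hence
\[
\int_{\mathbb R}\#\{S\in\mathcal S_n: r\in f(S\cap F)\}\,dr\le l^n\rho^{-n\alpha}.
\]
Summing these integrals against the weights $n^{-2}\,l^{-n}\rho^{n\alpha}$ and applying Borel--Cantelli shows the following. For almost every $r$ and all large $n\in\mathcal N$,
\[
N_{\rho^{-n}}\bigl(f^{-1}(r)\bigr)\le n^2\, l^n\rho^{-n\alpha}.
\]
So along the scales $\rho^{-n}$, $n\in\mathcal N$, the level set has counting exponent at most $d+\varepsilon-\alpha$. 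Letting $\varepsilon\to0$ along a countable sequence then gives the bound $d-\alpha$ at those scales for almost every $r$.

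The main obstacle is the passage from the scales in $\mathcal N$ to \emph{all} scales, which is what $\overline{\dim}_B$ of the level set requires. Lemma~\ref{lemma:from_covering_to_box} needs coverings for every $n$. The lower box dimension only supplies them along a subsequence, and a covering at scale $\rho^{-n}$, $n\in\mathcal N$, controls the counts at a finer scale $\eta<\rho^{-n}$ only up to the trivial factor $(\rho^{-n}/\eta)^p$. My first attempt would be an interpolation exploiting the freedom in choosing $\rho$. For each large $m$, pick $n\in\mathcal N$ with $n$ as close to $m$ as possible, and use the cubes of $\mathcal S_n$ subdivided to scale $\rho^{-m}$ as $\mathcal S_m$. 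This yields the cardinality bound $C l^m$ only if the gaps of $\mathcal N$ are $o(m)$, which is not guaranteed for a general measurable $F$. I expect this step either to need an additional regularity input on $F$, for instance $\underline{\dim}_B F=\overline{\dim}_B F$ as holds for the self-similar sets of Theorem~\ref{thm:gen_upper_box_precise}, or to deliver the bound only for $\underline{\dim}_B$ of the level set. I would therefore check carefully whether the corollary, in exactly this generality, truly follows from Lemma~\ref{lemma:from_covering_to_box}.
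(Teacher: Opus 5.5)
Your proposal does not prove the statement, but the obstruction you isolate is genuine and no interpolation can remove it: with $\underline{\dim}_B(F)$ on the right-hand side the corollary is false, even for compact $F$.

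Let $K\subseteq[0,1]$ be compact with $\underline{\dim}_B K=0$ and $\overline{\dim}_B K=1$. For example, take the numbers whose binary digits vanish at all positions in $\bigcup_k[n_{2k},n_{2k+1})$, with $n_{k+1}/n_k\to\infty$. Put $F=[0,1]\times K\subseteq\mathbb{R}^2$ and $f(x,y)=x$.
\begin{itemize}
\item A grid square $I\times J$ meets $F$ exactly when $I$ meets $[0,1]$ and $J$ meets $K$. Hence $N_\delta(F)\asymp\delta^{-1}N_\delta(K)$, so $\underline{\dim}_B F=1$ (while $\overline{\dim}_B F=2$).
\item Since $|x-x'|\le1$ on $F$, we have $|x-x'|\le|x-x'|^{\alpha}\le|(x,y)-(x',y')|^{\alpha}$, so $f\in C_1^{\alpha}(F)$.
\item Yet for every $r\in[0,1]$ the level set $f^{-1}(r)=\{r\}\times K$ has upper box dimension $1>1-\alpha=\underline{\dim}_B(F)-\alpha$.
\end{itemize}

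The second assertion fails as well for this $K$. Here every nonempty relatively open subset of $K$ again has upper box dimension $1$. Take any $f\in C_1^{\alpha}(F)$.
\begin{itemize}
\item The maps $y\mapsto\min_x f(x,y)$ and $y\mapsto\max_x f(x,y)$ are $1$-H\"older-$\alpha$ on $K$. So their images have lower box dimension $0$ and are Lebesgue null.
\item For every other $r\in f(F)$, the projection of $f^{-1}(r)$ to the second coordinate contains a relatively open piece of $K$. So $\overline{\dim}_B f^{-1}(r)\ge 1$.
\end{itemize}
Hence $D_{\overline{B}*}^{f}(F)\ge1$ on the open set of $f$ with $\lambda(f(F))>0$. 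This set is dense (use $(1-\delta)f+\delta x$). By Baire, every dense $G_\delta$ set meets it, so $D_{\overline{B}*}(\alpha,F)\ge1>\underline{\dim}_B(F)-\alpha$.

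The paper gives no argument beyond calling the statement an immediate corollary of Lemma~\ref{lemma:from_covering_to_box}, and that deduction has exactly the gap you found. The lemma needs a cover $\mathcal S_n$ for \emph{every} $n$, which forces $\log l/\log\rho\ge\overline{\dim}_B F$. So what it actually yields is $\overline{\dim}_B(f^{-1}(r))\le\overline{\dim}_B(F)-\alpha$ for a.e.\ $r$. This is consistent with the example above, where that bound is $2-\alpha$.

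Your averaging argument along $\mathcal N$ is correct and proves a different true statement: $\underline{\dim}_B(f^{-1}(r))\le\underline{\dim}_B(F)-\alpha$ for a.e.\ $r$, since for the lower box dimension a sequence of scales suffices.

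The paper uses the corollary only for the upper bound $s-\alpha$ in Theorem~\ref{thm:gen_upper_box_precise}. There the $\overline{\dim}_B(F)$ version is the relevant one, and it suffices because a self-similar $F$ has $\underline{\dim}_B F=\overline{\dim}_B F=\dim_H F$. So your guess that one needs $\underline{\dim}_B F=\overline{\dim}_B F$ (or $\overline{\dim}_B F$ in the statement) is right, and the main theorem is unaffected.
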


Recalling \cite{falconer1989dimensions}, we know that the box dimension and the Hausdorff dimension of a self-similar set coincide, thus
Corollary \ref{corollary:lower_box_dim_upper_bound} immediately implies one of the bounds constituting our main result.

We recall a further tool from \cite{sier} ,which we slightly refine for our purposes right away. 
Given a locally finite set $\mathcal{S}$ of non-overlapping, non-degenerate simplices, we say that $f:\bigcup \mathcal{S}\to \mathbb{R}$ is piecewise affine
if it is affine in each $S\in \mathcal{S}$. The following statement is formally stronger then what is given in \cite{sier}, the proof actually guarantees
this:

\begin{lemma}[{\cite[Lemma~4.4]{sier}}] \label{lemma:piecewise_affine_approx}
Assume that $F$ is compact and $c > 0$ is fixed.
Consider the locally non-constant, piecewise affine $c^{-}$-Hölder-$\alpha$ functions with domain $\mathbb{R}^p$.
Restricting these to $F$ we get a dense subset of $C_{c}^{\alpha}(F)$.
\end{lemma}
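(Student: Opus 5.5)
The plan is to approximate in three stages, with each stage losing only a controlled amount of the Hölder constant: first shrink the constant, then extend and smooth, then interpolate piecewise affinely and perturb. Throughout we use the uniform (supremum) distance on $C_c^{\alpha}(F)$, which is the topology in which density is claimed.

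\textbf{Step 1 (shrinking and extending).} Fix $f\in C_c^{\alpha}(F)$ and $\varepsilon>0$. Since $F$ is compact, $f$ is bounded. Choosing $\delta>0$ small, the function $g=(1-\delta)f$ is $\varepsilon$-close to $f$ and lies in $C_{c'}^{\alpha}(F)$ with $c'=(1-\delta)c<c$. Because $0<\alpha\le 1$, the map $(x,y)\mapsto|x-y|^{\alpha}$ is a metric on $\mathbb{R}^p$. Hence the McShane formula
\[
\tilde g(x)=\inf_{y\in F}\bigl(g(y)+c'|x-y|^{\alpha}\bigr)
\]
gives a $c'$-Hölder-$\alpha$ extension of $g$ to all of $\mathbb{R}^p$.

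\textbf{Step 2 (smoothing).} Convolve $\tilde g$ with a mollifier $\varphi_t$. Convolution with a probability density does not increase the Hölder constant, so $\tilde g*\varphi_t\in C_{c'}^{\alpha}(\mathbb{R}^p)$. It is uniformly close to $\tilde g$ for small $t$, by uniform continuity. Moreover $\tilde g*\varphi_t$ is globally Lipschitz with some constant $M=M(t)$, since $\nabla(\tilde g*\varphi_t)=\tilde g*\nabla\varphi_t$ is bounded: subtracting the value $\tilde g(x)$ shows that this gradient is controlled by $c't^{\alpha}\|\nabla\varphi_t\|_1$.

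\textbf{Step 3 (piecewise affine interpolation).} Take a regular triangulation of $\mathbb{R}^p$ of mesh $h$, and let $L$ be the linear interpolant of $u=\tilde g*\varphi_t$ at the vertices. Then
\[
\|L-u\|_\infty\le Mh,
\qquad
\mathrm{Lip}(L)\le C_pM,
\]
where $C_p$ depends only on the shape of the simplices.

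The Hölder estimate for $L$ is split at a scale $\eta$, chosen before $h$.
\begin{itemize}
\item For $|x-y|\le\eta$ we have $|L(x)-L(y)|\le C_pM\eta^{1-\alpha}|x-y|^{\alpha}$. We choose $\eta$ so that $C_pM\eta^{1-\alpha}<c$.
\item For $|x-y|\ge\eta$ we have $|L(x)-L(y)|\le c'|x-y|^{\alpha}+2Mh$. This is at most $c''|x-y|^{\alpha}$ with $c'<c''<c$ once $2Mh\le (c''-c')\eta^{\alpha}$.
\end{itemize}
Thus $L$ is $c^-$-Hölder-$\alpha$ on $\mathbb{R}^p$ and uniformly close to $f$ on $F$.

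\textbf{Step 4 (locally non-constant).} Perturb the vertex values of $L$ by amounts of size at most $\tau$, chosen so that no simplex receives all-equal vertex values. This is possible because there are countably many simplices and the bad choices are nowhere dense at each vertex. Such a perturbation changes the gradient on each simplex by at most $C_p\tau/h$ and the sup norm by at most $\tau$. Taking $\tau\ll h$, the two-scale estimates of Step 3 survive with slightly worse constants that are still below $c$. Restricting to $F$ then gives the required approximant.

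The main obstacle is Step 3: keeping the Hölder constant strictly below $c$ uniformly over all scales. Direct interpolation of the rough function $\tilde g$ would inflate the constant by a dimensional factor at scales comparable to $h$. The mollification in Step 2, together with choosing $\eta$ before $h$, is what avoids this.
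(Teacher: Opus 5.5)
The paper gives no proof of this lemma. It only cites \cite[Lemma~4.4]{sier} and remarks that the argument there yields the $c^-$-Hölder bound on all of $\mathbb{R}^p$, so there is nothing in the paper to compare your route with. Judged on its own, your argument is correct for $0<\alpha<1$. The order of choices $\delta\to t\to\eta\to c''\to h\to\tau$ is consistent. The McShane extension for the metric $|x-y|^\alpha$, the mollification and interpolation bounds, the two-scale estimate, and the vertex perturbation of Step 4 all check out.

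There is a genuine gap at $\alpha=1$, a case you explicitly include (``Because $0<\alpha\le 1$''). There $\eta^{1-\alpha}=1$, so the small-scale requirement $C_pM\eta^{1-\alpha}<c$ reads $C_pM<c$, and shrinking $\eta$ does nothing. In general $M$ cannot be taken noticeably below $c'$, and $c'$ is close to $c$. Meanwhile the first-order interpolation constant satisfies $C_p>1$ for every nondegenerate simplex once $p\ge 2$. To see this, interpolate the $1$-Lipschitz cone $x\mapsto|x-v_0|$: its interpolant has a gradient $w$ with $w\cdot e_i=1$ for the unit edge directions $e_i$ at $v_0$. There are at least two distinct such directions, which forces $|w|>1$. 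So the bound $\mathrm{Lip}(L)\le C_pM$ cannot give a constant below $c$. Your closing claim, that mollification together with choosing $\eta$ before $h$ prevents the dimensional inflation, is therefore false in the Lipschitz case.

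The repair is a second-order interpolation estimate.
\begin{itemize}
\item The mollified function $u=\tilde g*\varphi_t$ has bounded Hessian, $\|D^2u\|_\infty\le K$; for $\alpha=1$ one has $K\lesssim c'/t$.
\item On a simplex with vertices $v_0,\dots,v_p$, Taylor's formula gives $(w-\nabla u(v_0))\cdot(v_i-v_0)=O(Kh^2)$, where $w$ is the gradient of $L$ there.
\item Shape regularity then gives $|w-\nabla u(v_0)|\le C_p'Kh$.
\end{itemize}
Hence $\mathrm{Lip}(L)\le c'+C_p'Kh<c$ for small $h$, which handles all scales at once. The perturbation of Step 4 then adds only $C_p\tau/h$ to this bound. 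The paper's application only needs $\alpha<1$, since there $\alpha<\alpha_{n,m}<1$, but the lemma as stated is not restricted to that range.
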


\begin{remark}
Note that distinct $f\neq g$ might still result in the same $f|_{F}=g|_{F}$.
This is trivial of course, the reason why we highlight this is that $f$ and $g$ might have different moduli of continuity. It is important to emphasize
that the $c^{-}$-Hölder-$\alpha$ functions guaranteed by \Cref{lemma:piecewise_affine_approx} are $c^{-}$-Hölder-$\alpha$ even on the full space.
\end{remark}

\subsection{Various further tools}

The following extension theorem is a fundamental tool in our arguments:

\begin{theorem}[{\cite[Theorem~1]{[GrunbHolderext]}}] \label{thm:Grunb}
    Suppose that $F\sse \R^{p}$ and $f:F\to \R$ is a $c$-H\"older-$\alpha$ function.
    Then there exists a $c$-H\"older-$\alpha$ function $g:\R^{p}\to \R$
     such that
     $g(x)=f(x)$ for $x\in F$.
\end{theorem}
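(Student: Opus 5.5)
The plan is the classical McShane-type infimal convolution. It works once we note that for $0<\alpha\leq 1$ the quantity $d_\alpha(x,y)=|x-y|^{\alpha}$ is a metric on $\mathbb{R}^p$. Throughout, the exponent is assumed to satisfy $0<\alpha\leq 1$, which is the range used everywhere in the paper, and $F\neq\emptyset$; the case $F=\emptyset$ is trivial.

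\textbf{Step 1 (snowflake metric).} First I verify the triangle inequality for $d_\alpha$. The function $t\mapsto t^{\alpha}$ is concave on $[0,\infty)$ and vanishes at $0$, so it is subadditive: $(a+b)^{\alpha}\leq a^{\alpha}+b^{\alpha}$. Combined with the Euclidean triangle inequality and monotonicity of $t\mapsto t^\alpha$, this gives
\begin{displaymath}
|x-z|^{\alpha}\leq (|x-y|+|y-z|)^{\alpha}\leq |x-y|^{\alpha}+|y-z|^{\alpha}.
\end{displaymath}
With respect to $d_\alpha$, the hypothesis simply says that $f$ is $c$-Lipschitz on $F$.

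\textbf{Step 2 (definition).} Next I define
\begin{displaymath}
g(x)=\inf_{y\in F}\bigl(f(y)+c|x-y|^{\alpha}\bigr),\qquad x\in\mathbb{R}^p.
\end{displaymath}
To see that $g$ is finite, fix $y_0\in F$. For every $y\in F$, the Hölder condition and Step 1 give
\begin{displaymath}
f(y)\geq f(y_0)-c|y-y_0|^{\alpha}\geq f(y_0)-c|y-x|^{\alpha}-c|x-y_0|^{\alpha}.
\end{displaymath}
Hence $f(y)+c|x-y|^\alpha\geq f(y_0)-c|x-y_0|^\alpha$, so $g(x)>-\infty$. Choosing $y=y_0$ shows $g(x)<\infty$.

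\textbf{Step 3 (extension property).} For $x\in F$, taking $y=x$ gives $g(x)\leq f(x)$. The Hölder condition gives $f(y)+c|x-y|^\alpha\geq f(x)$ for every $y\in F$, so $g(x)\geq f(x)$. Therefore $g(x)=f(x)$ on $F$.

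\textbf{Step 4 (Hölder bound).} Take $x,x'\in\mathbb{R}^p$. For each $y\in F$, Step 1 gives
\begin{displaymath}
f(y)+c|x-y|^{\alpha}\leq f(y)+c|x'-y|^{\alpha}+c|x-x'|^{\alpha}.
\end{displaymath}
Taking the infimum over $y$ yields $g(x)\leq g(x')+c|x-x'|^{\alpha}$. Swapping the roles of $x$ and $x'$ gives $|g(x)-g(x')|\leq c|x-x'|^{\alpha}$.

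There is no real obstacle here. The only point needing care is Step 1: it is exactly where $\alpha\leq 1$ is used, and it is what makes both the finiteness in Step 2 and the estimate in Step 4 work.
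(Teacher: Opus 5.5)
Your proof is correct and complete. Subadditivity of $t\mapsto t^\alpha$ for $0<\alpha\le 1$ makes $|x-y|^\alpha$ a metric. The McShane infimal convolution $g(x)=\inf_{y\in F}\bigl(f(y)+c|x-y|^\alpha\bigr)$ is then finite, agrees with $f$ on $F$, and is $c$-H\"older-$\alpha$, as your Steps 2--4 verify.

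The paper gives no argument of its own here; it simply imports the statement from the cited extension theorem. That theorem is of Kirszbraun type, and its real strength is the vector-valued case, i.e., maps into a Hilbert space. Your scalar construction does not transfer there: extending an $\mathbb{R}^q$-valued map coordinatewise only gives the constant $c\sqrt{q}$.

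The paper only ever applies the statement to real-valued functions, so your self-contained argument is all that is needed. It is more elementary, and it is more general in another direction, since it works verbatim for an arbitrary metric space as domain in place of $\mathbb{R}^p$.

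You were also right to make $0<\alpha\le 1$ explicit. The paper leaves it implicit, but it is genuinely necessary. For $\alpha>1$, every H\"older-$\alpha$ function on $\mathbb{R}^p$ is constant. Yet on a two-point set $F$ there are non-constant $c$-H\"older-$\alpha$ functions, and these cannot be extended.
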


We will have to understand the level sets of piecewise affine functions. These level sets consist of partial hyperplanar slices of the fractal $F$. The dimension of such slices is the subject
of the vast theory of slicing theorems. On an intuitive level, the statement is that in almost every direction a positive measure of slices has dimension $\dim F -1$, where 
$\dim$ is either Hausdorff, box, or packing dimension, and assuming some homogeneity almost every slice intersecting $F$ has this dimension. While the following theorem
is originally stated for slices of any dimension, for technical simplicity we only state it for $p-1$, i.e., hyperplanar slices. 
We say that a subset $\mathcal{W}$ of $(p-1)$-dimensional linear 
subspaces is of full measure if the set of corresponding unit normal vectors $\{\pm n_{W}:W\in \mathcal{W}\}$ 
is a full measure set of the unit sphere $\mathbb{S}^{p-1}\subseteq \mathbb{R}^p$. Below $\dim$ is either the Hausdorff,
the lower box, the upper box, or the packing dimension.

\begin{theorem}[{\cite[Corollary~1]{wenxi}}] \label{prop:wenxi}
    Assume that $A\subseteq \mathbb{R}^p$ is a self-similar set with finitely many directions and $\dim_H A>1$.
    Then for a full measure set $\mathcal{W}_A$ of the $(p-1)$-dimensional linear subspaces, if $W\in \mathcal{W}_A$, then 
    $$\mathcal{H}^1(\mathrm{Pr}_{W^{\perp}}(A)\setminus \{a\in W^{\perp}: \dim (A\cap (W+a))= \dim_H A  -1\}) = 0,$$
    that is standard dimension drop occurs on a set of full measure. (Here $W^{\perp}$ denotes the 1-dimensional orthocomplement of $W$, 
    and $\mathrm{Pr}_{W^{\perp}}$ is the projection onto it.)
\end{theorem}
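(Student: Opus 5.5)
Theorem \ref{prop:wenxi} is quoted from \cite{wenxi}, but here is how I would prove it, with $\dim$ taken as Hausdorff dimension for the lower bound and upper box dimension for the upper bound. The remaining notions sit between these two, since $\dim_H\le\underline{\dim}_B\le\overline{\dim}_B$ and $\dim_H\le\dim_P\le\overline{\dim}_B$ for the bounded sets in question. So it suffices to show that, for a full measure set of $W$ and $\mathcal{H}^1$-a.e.\ $a\in\mathrm{Pr}_{W^\perp}(A)$,
$$\overline{\dim}_B(A\cap(W+a))\le s-1 \quad\text{and}\quad \dim_H(A\cap(W+a))\ge s-1,$$
where $s=\dim_H A$. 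Write $A=\bigcup_i \varphi_i(A)$ with $\varphi_i(x)=r_iO_ix+t_i$, and let $G$ be the finite group generated by the $O_i$.

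\textbf{Upper bound.} This holds for every direction $W$, by a counting argument.
\begin{enumerate}[(1)]
\item By \cite{falconer1989dimensions}, the box dimension of $A$ equals $s$. Hence for each $\delta=2^{-k}$, $A$ is covered by a family $\mathcal{C}_\delta$ of $\lesssim\delta^{-s-\varepsilon/2}$ sets of diameter at most $\delta$.
\item Each $C\in\mathcal{C}_\delta$ meets $W+a$ only for $a$ in an interval of length at most $\delta$. With $N_\delta(a)$ the number of elements of $\mathcal{C}_\delta$ meeting $A\cap(W+a)$, this gives
$$\int N_\delta(a)\,da\lesssim \delta^{1-s-\varepsilon/2}.$$
\item By Markov's inequality, the set of $a$ with $N_\delta(a)>\delta^{1-s-\varepsilon}$ has measure $\lesssim\delta^{\varepsilon/2}$. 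These bounds are summable over dyadic $\delta$.
\item By Borel--Cantelli, for a.e.\ $a$ the bound $N_\delta(a)\le\delta^{1-s-\varepsilon}$ holds for all small dyadic $\delta$. Letting $\varepsilon\downarrow 0$ along a sequence gives $\overline{\dim}_B(A\cap(W+a))\le s-1$ for a.e.\ $a$.
\end{enumerate}

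\textbf{Lower bound on a set of positive measure.} Let $\mu$ be the natural self-similar measure on $A$; it has dimension $s>1$.
\begin{enumerate}[(1)]
\item By Marstrand's projection theorem, for a.e.\ $W$ the projection $(\mathrm{Pr}_{W^\perp})_*\mu$ is absolutely continuous.
\item Disintegrate $\mu$ along the slices $W+a$. Marstrand's slicing argument, via energy integrals $I_t(\mu)<\infty$ for $t<s$, yields conditional measures $\mu_{W,a}$ with finite $(t-1)$-energy for $(\mathrm{Pr}_{W^\perp})_*\mu$-a.e.\ $a$.
\item Hence $\dim_H(A\cap(W+a))\ge s-1$ for $a$ in a set $E_W$ of positive $\mathcal{H}^1$-measure.
\item Since $G$ is finite and each $O\in G$ preserves the uniform measure on $\mathbb{S}^{p-1}$, a full measure set of $W$ has every $O^{-1}W$, $O\in G$, good in this sense. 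This intersection is where finitely many directions is used.
\end{enumerate}

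\textbf{Upgrade to full measure — the main obstacle.} Suppose the bad set $B=\mathrm{Pr}_{W^\perp}(A)\setminus\{\text{good } a\}$ had positive measure.
\begin{enumerate}[(1)]
\item Take a Lebesgue density point $a_0$ of $B$, and a small cylinder $\varphi_{\mathbf{i}}(A)$ whose projection is an interval-like set near $a_0$ in which $B$ has density close to $1$.
\item The map $\varphi_{\mathbf{i}}^{-1}$ carries $\varphi_{\mathbf{i}}(A)\cap(W+a)$ to $A\cap(O_{\mathbf{i}}^{-1}W+a')$, an affine rescaling in $a$. Also $\dim(A\cap(W+a))\ge\dim(\varphi_{\mathbf{i}}(A)\cap(W+a))$.
\item So a bad $a$ for $W$ forces $a'$ to be bad for the direction $O^{-1}_{\mathbf{i}}W$, which ranges over only finitely many directions.
\item This yields rescaled copies of the bad sets for the finitely many directions $O^{-1}W$, $O\in G$, with density tending to $1$ in $\mathrm{Pr}_{(O^{-1}W)^\perp}(A)$. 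Combined with step (3) of the lower bound, this should give a contradiction.
\end{enumerate}

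The delicate point is step (1): the projection of a cylinder need not cover a neighbourhood of $a_0$, and $\mathrm{Pr}(A)$ need not be an interval. I would first try to handle this by working with the measure $\nu=(\mathrm{Pr}_{W^\perp})_*\mu$, which is equivalent to Lebesgue measure on its support. I would use the Lebesgue--Besicovitch density theorem for $\nu$ along the cylinders, together with the uniform positive mass lower bound for good slices in each of the finitely many directions, to reach the contradiction. Once $B$ is shown to be $\nu$-null, I would then need to check that $\nu$-null implies $\mathcal{H}^1$-null on $\mathrm{Pr}_{W^\perp}(A)$; this does not follow from absolute continuity alone and needs a separate argument.
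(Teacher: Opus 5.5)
The paper gives no proof of this statement. It is quoted from \cite{wenxi}, so there is no internal argument to compare with, and your outline has to stand on its own. Several parts are fine: the sandwich reduction to $\overline{\dim}_B\le s-1$ and $\dim_H\ge s-1$, the Markov/Borel--Cantelli upper bound (valid for every $W$), the Marstrand positive-measure step (modulo one point below), and the intersection over the finitely many rotated directions.

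\textbf{The gap.} You never actually carry out the upgrade to full measure, which is the only part that goes beyond Marstrand. Step (4) ends with ``this should give a contradiction''. The repair you then propose rests on the claim that $\nu=(\mathrm{Pr}_{W^\perp})_*\mu$ is equivalent to Lebesgue measure on its support. That claim is unproved, and your own last sentence notes that it does not follow from absolute continuity. Since $\mathrm{supp}\,\nu=\mathrm{Pr}_{W^\perp}(A)$, that equivalence is exactly the ``$\nu$-null implies $\mathcal{H}^1$-null'' step you leave open. As written, the full-measure conclusion is not established.

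\textbf{How to close it.} The obstacle you single out is not real. Your steps (1)--(4) close directly with Lebesgue measure if you push \emph{good} sets forward instead of pulling bad sets back. You never need the projection of a cylinder to cover a neighbourhood of $a_0$; you only need it to lie inside an interval of comparable length.

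Fix $t\in(1,s)$. Let $G_V$ be the set of $a$ with $A\cap(V+a)\neq\emptyset$ and $\dim_H(A\cap(V+a))\ge t-1$, and put $B=\mathrm{Pr}_{W^\perp}(A)\setminus G_W$. Take a density point $a_0\in B$, a point $x\in A$ with $\mathrm{Pr}_{W^\perp}(x)=a_0$, and cylinders $\varphi_{\mathbf{i}}(A)\ni x$ with ratios $r_{\mathbf{i}}\to 0$. With $d=\mathrm{diam}\,A$ we have $\mathrm{Pr}_{W^\perp}(\varphi_{\mathbf{i}}(A))\subseteq I=[a_0-r_{\mathbf{i}}d,\,a_0+r_{\mathbf{i}}d]$.

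Your step (2), read forwards, gives $\psi_{\mathbf{i}}(G_{O_{\mathbf{i}}^{-1}W})\subseteq G_W\cap I\subseteq I\setminus B$, where $\psi_{\mathbf{i}}$ is the induced affine map of ratio $r_{\mathbf{i}}$ on the line. The density of $B$ at $a_0$ gives $\lambda(I\setminus B)\le 2\varepsilon r_{\mathbf{i}}d$ once $r_{\mathbf{i}}$ is small. Hence $\lambda(G_{O_{\mathbf{i}}^{-1}W})\le 2\varepsilon d$. Since $O_{\mathbf{i}}$ ranges over the finite group $G$, letting $\varepsilon\to 0$ forces $\lambda(G_{O^{-1}W})=0$ for some $O\in G$. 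This contradicts Marstrand, which you have secured for all $O^{-1}W$ simultaneously. No measure $\nu$ and no equivalence claim is needed; the same density argument is in fact what would prove that claim.

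\textbf{Remaining point.} ``The natural self-similar measure has dimension $s$'' needs a separation condition such as the OSC, and the statement as quoted assumes none. Instead, use a Frostman measure $\mu_t$ on $A$ with $I_t(\mu_t)<\infty$. Run Marstrand and the upgrade for each fixed $t$, and intersect over $t_j\uparrow s$ only afterwards. Intersecting before upgrading would not work, because a decreasing sequence of positive-measure sets can have null intersection. Full-measure subsets of $\mathrm{Pr}_{W^\perp}(A)$, by contrast, survive countable intersections.
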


We will use the terminology that an affine function is \emph{typically oriented for $A$} (or simply \emph{typically oriented}, if it does not cause ambiguity)
if its level sets are parallel to a linear subspace in $\mathcal{W}_A$ under the notation of Theorem \ref{prop:wenxi}.
(Or equivalently, its gradient is the normal vector of an element of $\mathcal{W}_A$.) 

Finally, we will need an understanding of the topology of self-similar sets. It is not difficult to see that if a set is the finite union of connected sets
of diameter at most $\varepsilon$ for any $\varepsilon$, then it is locally connected (see e.g. \cite[(15.1)]{Whyburn_1942}). 
Considering the similar images of a connected self-similar set, we find that it obviously has this property, implying the following:

\begin{corollary} \label{cor:connected_self_similar_is_locally_connected}
    Any connected self-similar set is locally connected.
\end{corollary}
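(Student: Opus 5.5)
The claim is that any connected self-similar set $F$ is locally connected. I will use the criterion quoted just before the statement, \cite[(15.1)]{Whyburn_1942}: a set that, for every $\varepsilon>0$, is a finite union of connected sets of diameter at most $\varepsilon$ is locally connected. The proof therefore reduces to producing such a decomposition of $F$ for every $\varepsilon$.

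First, let $F=\bigcup_{i=1}^m \varphi_i(F)$, where each $\varphi_i$ is a contracting similarity of ratio $r_i<1$. Put $r=\max_i r_i<1$. Iterating the defining identity $n$ times gives
$$F=\bigcup_{i_1,\dots,i_n}\varphi_{i_1}\circ\dots\circ\varphi_{i_n}(F),$$
a union of $m^n$ sets. Each piece is the image of $F$ under a similarity of ratio at most $r^n$, so its diameter is at most $r^n\diam F$. Here $\diam F<\infty$, since self-similar sets are by definition compact attractors.

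Second, each piece is connected, because it is the image of the connected set $F$ under a continuous map. Given $\varepsilon>0$, choose $n$ so large that $r^n\diam F\le\varepsilon$. Then $F$ is a finite union of connected sets of diameter at most $\varepsilon$, and Whyburn's criterion gives local connectivity.

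Nothing here is a real obstacle; the only content is the cited criterion. If one wanted a self-contained proof of that criterion, I would argue as follows. Fix $x\in F$ and $\varepsilon>0$, and take the pieces of diameter at most $\varepsilon$. Let $U$ be the union of all pieces containing $x$. Each such piece is connected and contains $x$, so $U$ is connected, and $\diam U\le 2\varepsilon$. Moreover $U$ is a neighbourhood of $x$ in $F$: the pieces not containing $x$ are compact and finitely many, so they lie at positive distance from $x$. This shows $F$ is connected im kleinen at every point, which implies local connectedness.
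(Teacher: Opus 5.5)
Your proof is correct and is essentially the paper's argument. The iterated images $\varphi_{i_1}\circ\dots\circ\varphi_{i_n}(F)$ decompose $F$, for every $\varepsilon>0$, into finitely many connected sets of diameter at most $\varepsilon$, and Whyburn's criterion then gives local connectedness. Your self-contained sketch of that criterion, in which the union of the compact pieces containing $x$ is a connected neighbourhood of diameter at most $2\varepsilon$, is also sound and goes slightly beyond what the paper writes out.
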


We note that this result is significantly improved by \cite{Hata1985}, turning out to be valid for attractors of weakly contractive IFS's.

\subsection{Auxiliary functions} \label{subsec:auxiliary_functions}

We will use a two-parameter family of auxiliary functions below, our aim is to guarantee level sets with large dimension. A similar family of functions was already introduced in \cite{BUCZOLICH2024531}.

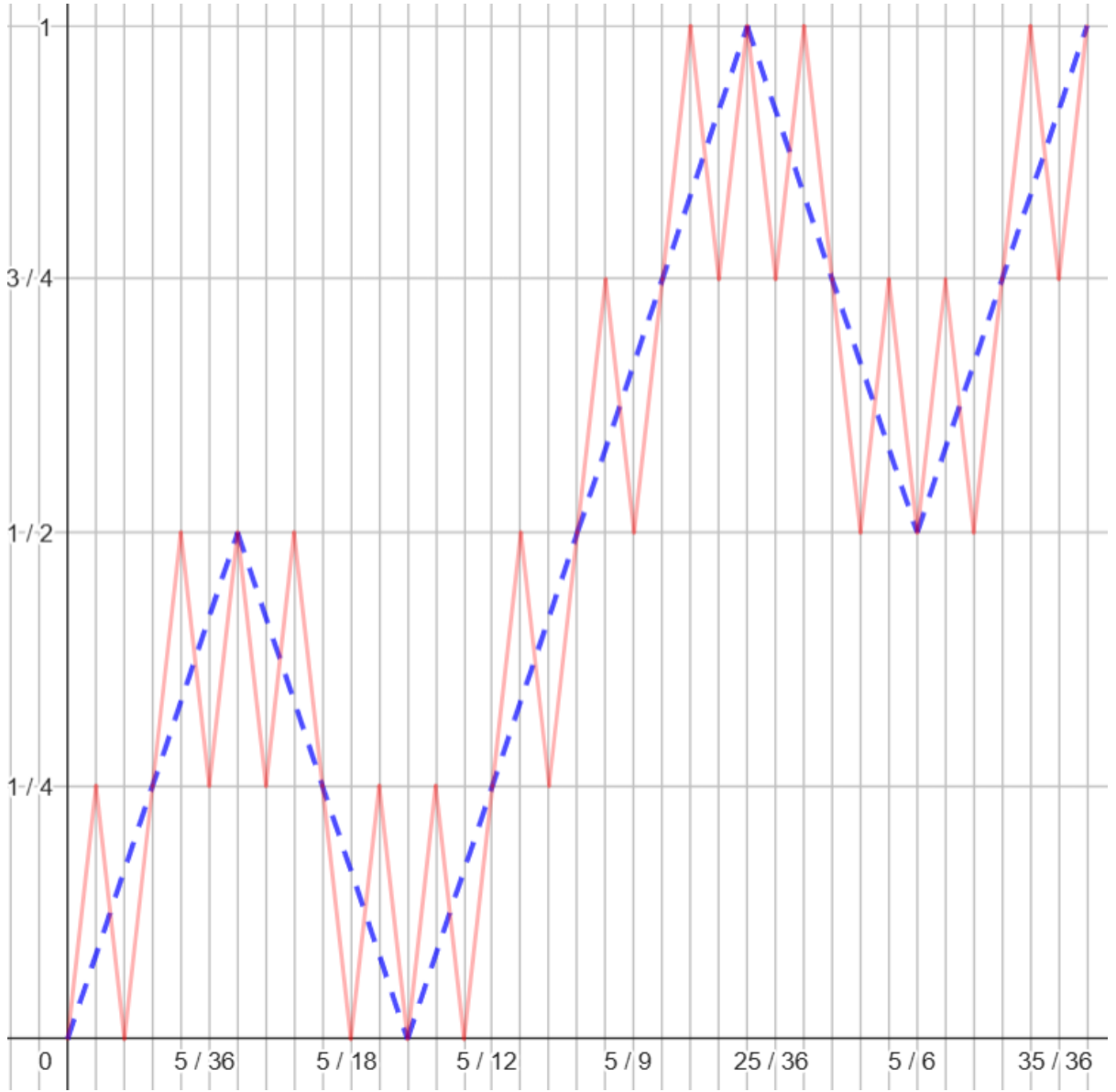
\begin{figure}[h]
    \centering
    \begin{tikzpicture}[
    x=13cm,
    y=13cm,
    every node/.style={font=\large}
]

\begin{scope}
    \clip (-0.035,-0.04) rectangle (1.015,1.015);

    \foreach \k in {0,...,36}{
        \draw[gray!30, line width=0.4pt]
            ({\k/36},0) -- ({\k/36},1);
    }

    \foreach \k in {1,...,6}{
        \draw[gray!55, line width=0.8pt]
            ({\k/6},0) -- ({\k/6},1);
    }

    \foreach \k in {0,...,4}{
        \draw[gray!40, line width=0.45pt]
            (0,{\k/4}) -- (1,{\k/4});
    }

    \draw[
        red!40,
        line width=1.25pt,
        line join=round
    ]
        (0,0)
        -- ({1/36},{1/4})
        -- ({2/36},0)
        -- ({3/36},{1/4})
        -- ({4/36},{1/2})
        -- ({5/36},{1/4})
        -- ({6/36},{1/2})
        -- ({7/36},{1/4})
        -- ({8/36},{1/2})
        -- ({9/36},{1/4})
        -- ({10/36},0)
        -- ({11/36},{1/4})
        -- ({12/36},0)
        -- ({13/36},{1/4})
        -- ({14/36},0)
        -- ({15/36},{1/4})
        -- ({16/36},{1/2})
        -- ({17/36},{1/4})
        -- ({18/36},{1/2})
        -- ({19/36},{3/4})
        -- ({20/36},{1/2})
        -- ({21/36},{3/4})
        -- ({22/36},1)
        -- ({23/36},{3/4})
        -- ({24/36},1)
        -- ({25/36},{3/4})
        -- ({26/36},1)
        -- ({27/36},{3/4})
        -- ({28/36},{1/2})
        -- ({29/36},{3/4})
        -- ({30/36},{1/2})
        -- ({31/36},{3/4})
        -- ({32/36},{1/2})
        -- ({33/36},{3/4})
        -- ({34/36},1)
        -- ({35/36},{3/4})
        -- (1,1);

    \tikzset{
        blue graph/.style={
            blue!70,
            line width=2.1pt,
            dash pattern=on 6pt off 5pt,
            line cap=butt
        }
    }

    \draw[blue graph]
        (0,0) -- ({1/6},{1/2});

    \draw[blue graph]
        ({1/3},0) -- ({1/6},{1/2});

    \draw[blue graph]
        ({1/3},0) -- ({2/3},1);

    \draw[blue graph]
        ({5/6},{1/2}) -- ({2/3},1);

    \draw[blue graph]
        ({5/6},{1/2}) -- (1,1);
\end{scope}

\draw[black!65, line width=0.7pt]
    (0,-0.045) -- (0,1.015);

\draw[black!65, line width=0.7pt]
    (-0.025,0) -- (1.015,0);

\node[below left=2pt, fill=white, inner sep=1pt]
    at (0,0) {$0$};

\foreach \k in {1,...,5}{
    \node[below=3pt, fill=white, inner sep=1pt]
        at ({\k/6},0) {$\frac{\k}{6}$};
}

\node[below=3pt, fill=white, inner sep=1pt]
    at (1,0) {$1$};

\node[left=4pt, fill=white, inner sep=1pt]
    at (0,{1/4}) {$\frac14$};

\node[left=4pt, fill=white, inner sep=1pt]
    at (0,{1/2}) {$\frac12$};

\node[left=4pt, fill=white, inner sep=1pt]
    at (0,{3/4}) {$\frac34$};

\node[left=4pt, fill=white, inner sep=1pt]
    at (0,1) {$1$};

\end{tikzpicture}
    \caption{The first two iterations of constructing the auxiliary function $\phi_{3, 2}$ over $[0, 1]$.}
    \label{fig:auxiliary_function}
\end{figure}

The function $\phi_{n, m}$ is defined by recursively given approximations, see Figure \ref{fig:auxiliary_function}. For parameters $m\geq 2$ and $n>1$ odd, we first define a piecewise linear function 
$\phi_{n, m}^{(1)}$ on $[0, 1]$ as follows: we consider the $\frac{1}{nm}$ grid intervals in $[0, 1]$, $\phi_{n, m}^{(1)}$ 
is going to be linear on each of these. The values in the endpoints
are defined such that $\phi_{n, m}^{(1)}$ oscillates between $\frac{i}{m}$ and $\frac{i+1}{m}$ on the interval $[\frac{i}{m}, \frac{i+1}{m}]$. 
As $n$ is odd, this defines a continuous function.

We obtain $\phi_{n, m}^{(l+1)}$ by replacing each linear piece of the graph of $\phi_{n, m}^{(l)}$ by an affine image of the graph of $\phi_{n, m}^{(1)}$, 
such that the values in the endpoints above are unchanged. This iteration tends to a continuous function $\phi_{n, m}$, which is 1-Hölder-$\alpha_{n,m}$ for 
$\alpha_{n,m} = \frac{\log m}{\log n  + \log m}$. Using the mass distribution principle, it is easy to show that every non-empty level set of $\phi_{n, m}$ has Hausdorff dimension $(1-\alpha_{n,m})$.
(They even have positive $(1-\alpha_{n,m})$-dimensional measure, similarly to the construction in \cite{Rajala2026}.) 
Possible $\alpha_{n, m}$s form a dense set in $[0, 1]$. This observation will have large importance.

Finally, extend $\phi_{n, m}$ to $\mathbb{R}$ with $\phi_{n, m}(x+k)=\phi_{n, m}(x)+k$ for every $x\in [0, 1)$, $k\in \mathbb{Z}$. Note that the extended
$\phi_{n, m}$ is not 1-Hölder-$\alpha_{n, m}$ globally, but it is 1-Hölder-$\alpha_{n, m}$ in every interval of length 1.  
We remark that $\phi_{n, m}$ maps any grid interval of length $\frac{1}{m}$ to itself, consequently, 
$\|\phi_{n, m} - \Id_{\mathbb{R}}\|_{\infty}\leq \frac{1}{m}$.

Consider the intervals $\left[\frac{i}{m^k}, \frac{i+1}{m^{k}}\right]$ for $i\in\mathbb{Z}$, denote their family by $\mathcal{I}_k$. 
Fix $I\in \mathcal{I}_k$. Observe that by induction, $n^k$ different grid intervals of length $(nm)^{-k}$ are mapped bijectively to $I$ by a maximal monotone piece of $\phi_{n, m}^{(k)}$.
Denote their family by $\mathcal{I}_{I, k}$.
By construction, the $\phi_{n, m}^{(k)}$-image of any $I'\in \mathcal{I}_{I, k}$ is the same as its $\phi_{n, m}$-image, thus $\phi_{n, m}^{-1}(I)=\bigcup \mathcal{I}_{I, k}$.
\begin{lemma} \label{lemma:measure_multiplying}
    $\phi_{n, m}$ is a measure-preserving function. More generally, if 
    $I'\in \mathcal{I}_{I, k}$, it is mapped to $I$ in a measure multiplying way, i.e., for every measurable $V\subseteq I$
    we have $\lambda(V)= n^k\lambda(\phi_{n, m}^{-1}(V)\cap I')$.
\end{lemma}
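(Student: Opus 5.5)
The plan is to prove the second, more general statement first on a generating class of sets $V$, and then extend to all measurable sets. Measure preservation of $\phi_{n,m}$ will then follow by summing over the pieces.

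\emph{Step 1: intervals of finer levels.} Fix $I\in\mathcal{I}_k$ and $I'\in\mathcal{I}_{I,k}$, and let $J\in\mathcal{I}_{k+j}$ with $J\subseteq I$. By construction, $\phi_{n,m}|_{I'}$ is obtained from $\phi_{n,m}$ on a unit interval by conjugating with affine maps. The domain is scaled by $(nm)^{-k}$ and the range by $m^{-k}$, with a possible orientation reversal in either, and the translation property $\phi_{n,m}(x+k)=\phi_{n,m}(x)+k$ covers the remaining cases. So it suffices to count $\phi_{n,m}^{-1}(J)\cap I'$ using the level-$j$ structure of the rescaled copy. That copy maps exactly $n^j$ grid intervals of length $(nm)^{-k-j}$ bijectively onto $J$, namely the elements of $\mathcal{I}_{J,k+j}$ lying in $I'$. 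Their union is $\phi_{n,m}^{-1}(J)\cap I'$, up to finitely many endpoints. Hence
\[
\lambda(\phi_{n,m}^{-1}(J)\cap I') = n^j (nm)^{-k-j} = n^{-k} m^{-k-j} = n^{-k}\lambda(J),
\]
which is the claimed identity for $V=J$. I expect the main bookkeeping to be this step: checking that the $n^{k+j}$ elements of $\mathcal{I}_{J,k+j}$ split evenly, $n^j$ into each $I'\in\mathcal{I}_{I,k}$. This follows from the recursive replacement, since each monotone level-$k$ piece is refined identically into $n^j$ monotone level-$(k+j)$ pieces over each subinterval.

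\emph{Step 2: extension to all measurable $V$.} By additivity, the identity extends to finite unions of $m$-adic intervals inside $I$. These generate the Borel $\sigma$-algebra on $I$. Both sides, $V\mapsto\lambda(V)$ and $V\mapsto n^k\lambda(\phi_{n,m}^{-1}(V)\cap I')$, are finite measures on $I$; the second is a measure because $\phi_{n,m}$ is continuous, hence Borel measurable, and preimages respect countable disjoint unions. Two finite measures agreeing on a $\pi$-system that generates the $\sigma$-algebra and contains $I$ coincide, by the $\pi$--$\lambda$ theorem. So the identity holds for all Borel $V$.

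For Lebesgue measurable $V$, I would write $V = B\cup N$ with $B$ Borel and $N$ contained in a Borel null set $N'$. Applying the Borel case to $N'$ shows that $\phi_{n,m}^{-1}(N')\cap I'$ is null, so $\phi_{n,m}^{-1}(N)\cap I'$ is measurable and null. This gives the identity for $V$.

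\emph{Step 3: measure preservation.} Since $\phi_{n,m}^{-1}(I)=\bigcup\mathcal{I}_{I,k}$ with $n^k$ pieces overlapping only at endpoints, summing Step 2 over $I'\in\mathcal{I}_{I,k}$ gives $\lambda(\phi_{n,m}^{-1}(V))=\lambda(V)$ for $V\subseteq I$. Covering $\mathbb{R}$ by the countably many $I\in\mathcal{I}_0$ then yields that $\phi_{n,m}$ is measure-preserving on $\mathbb{R}$.
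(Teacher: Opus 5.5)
Your proposal is correct and is essentially the paper's argument. Like the paper, you count the $n^{j}$ level-$(k+j)$ preimage intervals of a grid interval $J\subseteq I$ inside $I'$ and extend from grid intervals to all measurable $V$; you do this via the $\pi$--$\lambda$ theorem and completion, while the paper uses open sets, complements and regularity. Measure preservation then comes from the unit intervals.

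One correction, on a point the paper also glosses over. The set $\phi_{n,m}^{-1}(J)\cap I'$ exceeds the union of those $n^j$ intervals not by finitely many endpoints but by the Cantor-type sets $\phi_{n,m}^{-1}(\partial J)$ inside neighbouring grid intervals. These sets are still null: for each $l\ge k$ a point $y$ lies in at most two intervals of $\mathcal{I}_l$, so $\phi_{n,m}^{-1}(y)\cap I'$ is covered by $2n^{l-k}$ intervals of length $(nm)^{-l}$, of total length $2n^{-k}m^{-l}\to 0$. This null-set fact is exactly what three of your steps need: the additivity over closed intervals sharing endpoints, the one-point intersections in your $\pi$-system, and the summation in Step 3.
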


\begin{proof}
    The first claim follows from the second indeed, applied for $k=0$: that shows that $\phi_{n, m}$ is measure-preserving restricted to any invariant set of the form $[i, i+1]$,
    and consequently it is measure-preserving over the full line.
    
    For the second claim, take any grid subinterval $J$ of $I$ in $\mathcal{I}_l$, $l\geq k$. Restricting the the discussion leading up to the lemma to $I'$, we see
    that $\mathcal{I}_{J, l}$ contains $n^{l-k}$ intervals of length $(nm)^{-l}$, and $\phi_{n, m}^{-1}(J)\cap I'$ is simply the union of these. This proves the claim
    of the lemma for $V=J$. As any relative open set $V\subseteq I$ can be written as the countable union of elements of $\bigcup_{l=k}^{\infty}\mathcal{I}_l$,
    the claim of the lemma immediately extends to any relative open $V$, and then by taking complements to any closed subset $V$ of $I$. By the regularity of $\lambda$, it extends
    then to all measurable $V\subseteq I$.
\end{proof}

We observe $\phi_{n, m}(1-x)=1-\phi_{n, m}(x)$, and take note of the following self-affine property of the graph of $\phi_{n, m}$: 
over any interval of the form
$\left[\frac{in+k}{nm}, \frac{in+k+1}{nm}\right]$, $i\in\mathbb{Z}$, $0\leq k<n$, if $k$ is even, we have
$$\phi_{n, m}(x)=\frac{i}{m}+\frac{\phi_{n, m}\left(nm\left(x-\frac{in+k}{nm}\right)\right)}{m},$$
while if $k$ is odd, we have
$$\phi_{n, m}(x)=\frac{i+1}{m}-\frac{\phi_{n, m}\left(nm\left(x-\frac{in+k}{nm}\right)\right)}{m}.$$
By iteration, this idea extends to any grid interval $J$ of length $(nm)^{-k}$: there exist affine maps $\psi_{J}^*, \psi_{J}:\mathbb{R}\to \mathbb{R}$ such 
that $\psi_{J}(J)=[0, 1]$ and for every $x\in J$, we have
\begin{equation}\label{*konjeq} 
 \phi_{n, m}(x)=(\psi_{J}^*\circ\phi_{n, m}\circ\psi_{J})(x).
\end{equation}

\section{Generic box dimension of level sets on locally connected domains} \label{sec:box_dimension_locally_connected}

In \cite{BUCZOLICH2024531}, the existence of a dense $G_\delta$ set over which $D_{\overline{B}*}^f(F) = D_{\overline{B}*}(\alpha,F)$ holds is established for fractals
having the following connectivity property defined in terms of certain partitions of $F$:

\begin{definition} \label{def:nicely_connected}
    We say that $(\mathcal{T}_n)_{n=1}^{\infty}$ is a  {\it box dimension defining sequence} (BDDS) of $F\subseteq \mathbb{R}^p$ 
    if 
    \begin{itemize}
        \item  each $\mathcal{T}_n$ is a countable family of closed subsets covering  $F$ 
        in a non-overlapping manner,
        \item $\mathcal{T}_{n+1}$ is a refinement of $\mathcal{T}_{n}$ for each $n\geq 1$, that is for any $T\in\mathcal{T}_n$, 
        there is some $m>0$ and $T_i\in \mathcal{T}_{n+1}$ ($i=1, 2, ..., m$) such that
        $T\cap F=\bigcup_{i=1}^{m}T_i \cap F$,
        \item there exists $C>0$ and $0<q<1$ such that for any $n$ and $H\in \mathcal{T}_n$,
        $H$ contains a ball with diameter $C^{-1}q^{n}$ and $\diam(H) \leq Cq^{n}$. 
         The quantity  $q$ is the {\it base} of $(\mathcal{T}_n)_{n=1}^{\infty}$. 
    \end{itemize}
    
    The set $F\subseteq \mathbb{R}^p$ has {\it nice connection type}, if it has BDDS  
    such that $F\cap H$ has countably many components for any $H\in \bigcup_{n=1}^{\infty}\mathcal{T}_n$.
\end{definition}

As discussed in \cite{BUCZOLICH2024531}, (upper and lower) box dimension can be computed using box dimension defining sequences, and having nice connection type mitigates the handling of
level sets, culminating in the following two statements:

\begin{lemma}[{\cite[Lemma~3.7]{BUCZOLICH2024531}}]\label{lemma:upper_box_generic}
    Suppose that $0< \aaa\leq 1$,  $F\subset\R^p$ is compact with nice connection type, witnessed by $(\mathcal{T}_n)_{n=1}^{\infty}$.
    Moreover, assume that $E=F$ or $E = F \cap H$ for some $H\in \bigcup_{n=1}^{\infty}\mathcal{T}_n$, and $\cau\subset C_1^\alpha(F)$ is open.
    If $\{f_1,f_2,\ldots\}$ is a countable dense subset of $\cau$, then there is a dense $G_\delta$ subset $\cag$ of $\cau$ such that 
    \begin{equation}\label{eqdfb_upper_boxdim}
    \inf_{f\in\cag} D_{\overline{B}*}^f(E) \ge \inf_{k\in\N} D_{\overline{B}*}^{f_k}(E).
    \end{equation}
\end{lemma}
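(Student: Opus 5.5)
The plan is a standard Baire category argument based on the lower semicontinuity-type behaviour of upper box dimension at fixed scales. First I fix the data that make box dimension computable: since $F$ has nice connection type witnessed by $(\mathcal{T}_n)$ with base $q$, for any $A\subseteq E$ the quantity $N_n(A)=\#\{H\in\mathcal{T}_n: H\cap A\neq\emptyset\}$ satisfies $\overline{\dim}_B A=\limsup_n \frac{\log N_n(A)}{-n\log q}$, up to the usual comparison with $\delta$-mesh counts. This holds because each $H$ contains a ball of diameter $C^{-1}q^n$ and has diameter at most $Cq^n$. Put $d=\inf_k D_{\overline{B}*}^{f_k}(E)$. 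If $d=0$ there is nothing to prove, so assume $d>0$ and fix rational $d'<d$. I will construct, for each $d'$ and each $N$, an open dense set $\mathcal{U}_{d',N}\subseteq\mathcal{U}$, and then take $\mathcal{G}=\bigcap_{d'}\bigcap_N\mathcal{U}_{d',N}$.

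Key step (openness near each $f_k$). Since $D_{\overline{B}*}^{f_k}(E)>d'$, the set $R_k=\{r\in f_k(E):\overline{\dim}_B f_k^{-1}(r)\ge d''\}$, for a fixed $d'<d''<d$, has full measure in $f_k(E)$, and $\lambda(f_k(E))>0$. For $r\in R_k$ there are arbitrarily large $n\ge N$ with $N_n(f_k^{-1}(r)\cap E)\ge q^{-nd''}$. Here I use the countably many components hypothesis. Take such $n$ and a cell $H\in\mathcal{T}_n$ meeting $f_k^{-1}(r)$. Inside $H$, some component $K$ of $F\cap H$ meets the level set, and $f_k(K)$ is an interval containing $r$. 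If $r$ lies in the interior of $f_k(K)$ (which holds for a.e.\ $r$, since countably many components give countably many interval endpoints), then any $g$ with $\|g-f_k\|_\infty<\eta$, with $\eta$ small depending on $r,n$, still attains $r$ on $K\subseteq H$. So the count $N_n(g^{-1}(r))\ge q^{-nd''}$ persists. By inner regularity and a finite selection, I choose a compact $R'_k\subseteq R_k$ of measure at least half of $\lambda(f_k(E))$, finitely many scales $n\ge N$, and one $\eta_k>0$ such that every $g$ in the ball $B(f_k,\eta_k)$ satisfies $\max_{n\in[N,M]}q^{nd''}N_n(g^{-1}(r)\cap E)\ge1$ for all $r$ in a set of measure at least $\lambda(R'_k)/2$. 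To make this uniform in $r$, I use measurability of $r\mapsto\eta(r)$ and Egorov-type selection: pick $\eta$ so that the set of $r$ with $\eta(r)>\eta$ has large measure. Then $\mathcal{U}_{d',N}=\bigcup_k B(f_k,\eta_k)\cap\mathcal{U}$ is open and dense.

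Conclusion and main obstacle. For $g\in\mathcal{G}$, the construction shows that for each $N$ there is a set of $r$ of measure bounded below on which $\overline{\dim}$ at scales $\ge N$ reaches $d''$. The obstacle is exactly here: the lower bound on measure depends on $k$, hence on $N$, and a positive-measure exceptional set of low-dimensional levels could survive. To handle this I would redefine the target condition to $\lambda\{r\in g(E): \text{no } n\in[N,M] \text{ with } N_n(g^{-1}(r))\ge q^{-nd''}\}<\varepsilon$ for rational $\varepsilon$. This is also open under perturbation by the persistence argument, since it asks for a full-measure-up-to-$\varepsilon$ property instead of a fixed fraction. Moreover $g(E)$ varies continuously with $g$, because $E$ is compact. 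Intersecting over $\varepsilon,N,d'$ yields that a.e.\ $r\in g(E)$ has $\overline{\dim}_B g^{-1}(r)\ge d'$ for all $d'<d$. This gives $D_{\overline{B}*}^g(E)\ge d$ and hence \eqref{eqdfb_upper_boxdim}. Making the openness with an $\varepsilon$-error rigorous, in particular controlling the boundary of $g(E)$ and the endpoints of the intervals $g(K)$, is where I expect the real work.
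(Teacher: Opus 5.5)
Your mechanism is the paper's (see the proof of Lemma~\ref{lemma:upper_box_generic_locally_connected}, which the paper presents as a close adaptation of the original one). At finitely many fixed scales, the cells met by $f_k^{-1}(r)\cap E$ are still met by $g^{-1}(r)\cap E$ whenever $\|g-f_k\|_\infty$ is below a radius $\eta(r)>0$. This holds because, off a countable set of values, $r$ is an interior point of $f_k(K)$ for each of the countably many components $K$ involved. (When $E=F\cap H_0$, take $K$ inside level-$n$ cells $T$ with $T\cap F\subseteq E$, which the refinement property provides, so that the new points of $g^{-1}(r)$ lie in $E$.) You are also right that the ``half of $\lambda(f_k(E))$'' version cannot give a full-measure conclusion.

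The repair, however, leaves its key step open and, as formulated, has a hole. Suppose $\mathcal{U}_{d'',N,\varepsilon}$ is the set of all $g\in\mathcal{U}$ with $\lambda\{r\in g(E):\ N_n(g^{-1}(r)\cap E)<q^{-nd''} \text{ for all } n\in[N,M]\}<\varepsilon$ for some $M$. Then every $g$ with $\lambda(g(E))=0$ satisfies all these conditions vacuously and ends up in $\mathcal{G}$; an example is a function constant on $E$, if $\mathcal{U}$ contains one. But $D_{\overline{B}*}^g(E)=0$ for such $g$ by definition. So your last step, from ``a.e.\ $r\in g(E)$ has $\overline{\dim}_B g^{-1}(r)\ge d'$'' to $D_{\overline{B}*}^g(E)\ge d$, needs $\lambda(g(E))>0$, and nothing in your construction provides it.

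One fix is to intersect also with $\{g\in\mathcal{U}: g \text{ is non-constant on some component of } E\}$. This set is open and forces $\lambda(g(E))>0$. It is dense because it contains every $f_k$ once $d>0$; here you use that $E$ has countably many components. Your openness claim is in fact true: apply the same endpoint argument at an arbitrary $g$, and use that $\lambda(\tilde g(E)\setminus g(E))\to 0$ as $\tilde g\to g$, since $g(E)$ is compact. So with this addition your route works.

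The paper's route is cheaper. It keeps the open sets as unions of balls around the $f_k$, so openness is automatic, and persistence is needed only at $f_k$, which your key step already gives. The only change is to replace ``half'' by an error $2^{-n}$:
\begin{itemize}
\item Let $m(n,k,r)\ge n$ be the first scale at which $f_k$ has a large count.
\item Take $R_{k,n}\subseteq f_k(E)$ with $\lambda(R_{k,n})>\max\{0,\lambda(f_k(E))-2^{-n}\}$ on which the persistence radius at scale $m(n,k,r)$ is at least some $\rho_{k,n}>0$. Such a set exists as a superlevel set of a measurable function.
\item Shrink $\rho_{k,n}$ so that $\lambda(g(E)\setminus f_k(E))<2^{-n}$ for all $g\in B(f_k,\rho_{k,n})$.
\end{itemize}
For $g\in\bigcap_n\bigcup_k B(f_k,\rho_{k,n})\cap\mathcal{U}$ there are $k_n$ with $\lambda(g(E)\setminus R_{k_n,n})<2^{-n+1}$. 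Borel--Cantelli then puts a.e.\ $r\in g(E)$ in all but finitely many $R_{k_n,n}$, which gives infinitely many scales with large counts. Moreover $R_{k_n,n}\subseteq g(E)$, so $\lambda(g(E))>0$ comes for free. Finally, intersect over rational thresholds below $d$.
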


\begin{theorem}[{\cite[Theorem~3.8]{BUCZOLICH2024531}}] \label{thm:upper_box_generic}
    Suppose that $0< \aaa\leq 1$,  $F\subset\R^p$ is compact with nice connection type, witnessed by $(\mathcal{T}_n)_{n=1}^{\infty}$.
    Then there is a dense $G_\delta$ subset $\cag$ of $C_1^\alpha(F)$ 
    such that for every $f\in\cag$ we have $D_{\overline{B}*}^f(F) = D_{\overline{B}*}(\alpha,F)$.   
\end{theorem}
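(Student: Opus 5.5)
The statement to prove is Theorem~\ref{thm:upper_box_generic}, which is cited from \cite{BUCZOLICH2024531}. I would derive it from Lemma~\ref{lemma:upper_box_generic} by a diagonal argument over a countable family of open sets. Let $d^*=D_{\overline{B}*}(\alpha,F)$.

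The first step is the upper bound. By the definition \eqref{def:generic_upper}, for every $k$ there is a dense $G_\delta$ set $\mathcal{G}_k\in\mathcal{G}_{1,\alpha}$ such that $D_{\overline{B}*}^f(F)\le d^*+1/k$ for all $f\in\mathcal{G}_k$. Then $\mathcal{G}^{\le}=\bigcap_k\mathcal{G}_k$ is a countable intersection of dense $G_\delta$ sets. Since $C_1^\alpha(F)$ is a closed subset of $C(F)$, it is a complete metric space, so Baire's theorem shows $\mathcal{G}^{\le}$ is again dense $G_\delta$. On it, $D_{\overline{B}*}^f(F)\le d^*$.

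The second step is the lower bound, where Lemma~\ref{lemma:upper_box_generic} (with $E=F$) does the work. Since $C_1^\alpha(F)$ is separable, fix a countable base $\{\mathcal{U}_j\}$ of nonempty open subsets of $C_1^\alpha(F)$. Fix $j$ and $\varepsilon>0$. I claim that the set $\{f\in\mathcal{U}_j: D_{\overline{B}*}^f(F)\ge d^*-\varepsilon\}$ is dense in $\mathcal{U}_j$.

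Suppose not. Then there is a nonempty open $\mathcal{V}\subseteq\mathcal{U}_j$ on which $D_{\overline{B}*}^f(F)<d^*-\varepsilon$ for a dense set of $f$; pick a countable dense subset $\{f_k\}$ of $\mathcal{V}$ of such functions. Lemma~\ref{lemma:upper_box_generic} then gives a dense $G_\delta$ set in $\mathcal{V}$ whose infimum is at least $\inf_k D_{\overline{B}*}^{f_k}(F)$. That is the wrong direction for a contradiction, so I would instead apply the lemma the other way around.

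Pick any countable dense $\{f_k\}\subseteq\mathcal{U}_j$ and set $a_j=\inf_k D_{\overline{B}*}^{f_k}(F)$. The lemma yields a dense $G_\delta$ set $\mathcal{G}'_j\subseteq\mathcal{U}_j$ on which $D_{\overline{B}*}^f(F)\ge a_j$. Now choose the dense sequence so that it nearly attains the supremum: take $f_k\in\mathcal{G}^{\le}\cap\mathcal{U}_j$, which is possible because this set is dense. The question is whether we can force all $D^{f_k}_{\overline{B}*}(F)$ close to $d^*$. The key point is to use the definition of $d^*$ as an infimum over dense $G_\delta$ sets. If some nonempty open $\mathcal{V}$ had a dense $G_\delta$ subset $\mathcal{H}$ on which $D_{\overline{B}*}^f(F)\le d^*-\varepsilon$, then I would glue sets as follows. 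Let $\{\mathcal{V}_i\}$ be a maximal disjoint family of open sets of this kind, with associated sets $\mathcal{H}_i$, and let $\mathcal{W}$ be the complement of the closure of $\bigcup\mathcal{V}_i$. The glued set is $\bigcup\mathcal{H}_i$ together with $\mathcal{G}^{\le}\cap\mathcal{W}$, which is a dense $G_\delta$ set.

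If $\mathcal{W}=\emptyset$, this gives a generic supremum of at most $d^*-\varepsilon$, contradicting the definition of $d^*$. So the desired property is \emph{localizable}, and I expect the main obstacle to be exactly this localization. The resolution I would try first is to show that $D_{\overline{B}*}^f(F)$ has the same generic value in every open set. This uses the nice connection type: one modifies $f$ on a single piece $H\in\mathcal{T}_n$ by inserting a rescaled copy of a function from another open set. The dimension of level sets on $F\cap H$ then transfers, because upper box dimension is finitely stable, and Lemma~\ref{lemma:upper_box_generic} with $E=F\cap H$ controls the local piece.

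Granting localizability, apply Lemma~\ref{lemma:upper_box_generic} on each $\mathcal{U}_j$ with $\varepsilon=1/i$. Take $\mathcal{G}''_{j,i}$ to be the resulting dense $G_\delta$ set in $\mathcal{U}_j$ united with the complement of $\overline{\mathcal{U}_j}$. The intersection of $\mathcal{G}^{\le}$ with all the $\mathcal{G}''_{j,i}$ is then a dense $G_\delta$ set on which $D_{\overline{B}*}^f(F)=d^*$.
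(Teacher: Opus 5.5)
\textbf{There is a genuine gap: the homogeneity step you grant is the whole theorem, and the mechanism you sketch for it does not work.}

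What is correct: the definition of $d^*=D_{\overline{B}*}(\alpha,F)$ together with Baire's theorem in the complete space $C_1^\alpha(F)$ gives a dense $G_\delta$ set on which $D_{\overline{B}*}^f(F)\le d^*$. Your reduction is also right. If $\{f: D_{\overline{B}*}^f(F)\ge d^*-\varepsilon\}$ is dense for every $\varepsilon>0$, then Lemma~\ref{lemma:upper_box_generic} with $E=F$, applied to a countable dense subset of it, finishes the proof.

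But that density is exactly the ``localizability'' you grant. If density fails, there is a nonempty open $\mathcal V$ on which \emph{every} $f$ has $D_{\overline{B}*}^f(F)<d^*-\varepsilon$. The definition of $d^*$ as $\inf_{\mathcal G}\sup_{f\in\mathcal G}$ rules this out only when such bad open sets have dense union. That is all your gluing argument shows, and nothing follows when your $\mathcal W$ is nonempty. In fact, a quantity equal to $d_1$ on one open region of $C_1^\alpha(F)$ and to $d_2>d_1$ on a disjoint open region, whose union is dense, satisfies both the conclusion of Lemma~\ref{lemma:upper_box_generic} (for $E=F$) and the definition of $d^*$. So genuinely geometric input is required, and your proposal does not supply it.

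Why the sketched mechanism fails:
\begin{enumerate}
\item Nice connection type gives closed pieces $H\in\mathcal T_n$ with $F\cap H$ having countably many components, but no similarity maps between $F$ and $F\cap H$. So ``inserting a rescaled copy of a function from another open set'' has no meaning for such $F$.
\item Even where copies exist, changing $f$ on a single piece only affects levels attained there. Finite stability gives $\overline{\dim}_B f^{-1}(r)\ge\overline{\dim}_B(f^{-1}(r)\cap H)$, which helps only for $r\in f(F\cap H)$. Since $D_{\overline{B}*}^f(F)$ is an essential \emph{infimum}, almost every level of $f(F)$ must carry a large level set. That includes levels attained only outside $H$, or in the transition zone needed to keep the H\"older constant $1$ across $\partial H$.
\item Lemma~\ref{lemma:upper_box_generic} with $E=F\cap H$ only gives $\inf_k D_{\overline{B}*}^{f_k}(F\cap H)$ as a generic lower bound. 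You would again need a dense supply of functions with large local essential infimum on that piece, which your sketch does not produce.
\end{enumerate}

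According to the paper, the deduction of the theorem from Lemma~\ref{lemma:upper_box_generic} uses the lemma for the local sets $E=F\cap H$ as a family. The property used is that such sets of arbitrarily small diameter cover $F$. What your proposal is missing is a local-to-global step of this kind: one that controls level sets on every piece of a fine cover at once, rather than on a single transplanted piece.
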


We would like to apply these results on arbitrary connected self-similar sets. By Corollary \ref{cor:connected_self_similar_is_locally_connected}, we know that such sets are locally connected,
however, it seems to be a rather nontrivial geometric question how local connectedness relates to nice connection type. It turns out that instead of understanding
this relationship, it is easier to adjust the proof of
the previous two results to the case of locally connected sets. The following proof is highly similar to the proof of Lemma \ref{lemma:upper_box_generic}. 

\begin{lemma}\label{lemma:upper_box_generic_locally_connected}
    Suppose that $0< \aaa\leq 1$,  $F\subset\R^p$ is a locally connected, bounded, measurable set.
    Moreover, assume that $E = F \cap H$ for some non-empty open set $H$ and $\cau\subset C_1^\alpha(F)$ is open.
    If $\{f_1,f_2,\ldots\}$ is a countable dense subset of $\cau$, then there is a dense $G_\delta$ subset $\cag$ of $\cau$ such that 
    \begin{equation}\label{eqdfb_upper_boxdim_locally_connected}
    \inf_{f\in\cag} D_{\overline{B}*}^f(E) \ge \inf_{k\in\N} D_{\overline{B}*}^{f_k}(E).
    \end{equation}
\end{lemma}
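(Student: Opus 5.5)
The plan is to run the Baire category argument of Lemma \ref{lemma:upper_box_generic}, with nice connection type replaced by the following consequence of local connectedness: whenever $f$ crosses the level $r$ inside a small connected piece of $E$, every $g$ uniformly close to $f$ also takes the value $r$ in that piece. Put $d=\inf_k D_{\overline{B}*}^{f_k}(E)$. We may assume $d>0$. Unwinding the definition, it suffices to find a dense $G_\delta$ set $\mathcal{G}\subseteq\mathcal{U}$ such that for every $g\in\mathcal{G}$ and every $d''<d$,
\[
\lambda\{r\in g(E): \overline{\dim}_B (g^{-1}(r)\cap E)\le d''\}=0 .
\]

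\textbf{Preparatory facts.} Since $H$ is open and $F$ is locally connected, $E=F\cap H$ is locally connected. Hence the components of relatively open subsets of $E$ are relatively open, and $E$ has only countably many components. Using separability, we fix a countable base $\mathcal{B}$ of $E$ consisting of connected relatively open sets.

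\textbf{Step 1: robustness of a single level set.} For each $f_k$ and each $U\in\mathcal{B}$, the image $f_k(U)$ is an interval. Let $Z_k$ be the countable set of all endpoints of the intervals $f_k(U)$, $U\in\mathcal{B}$. Fix $r\notin Z_k$, a scale $\delta>0$, and a point $x\in f_k^{-1}(r)\cap E$.
\begin{itemize}
\item There is $U\in\mathcal{B}$ with $x\in U$ and $\operatorname{diam}U<\delta$.
\item Then $r\in f_k(U)$ but $r$ is not an endpoint of $f_k(U)$, so $r\in\operatorname{int} f_k(U)$.
\end{itemize}
Cover $f_k^{-1}(r)\cap E$ by $N_\delta(f_k^{-1}(r)\cap E)$ many $\delta$-boxes and pick one such $U$ in each box. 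Since only finitely many $U$ are involved, there is $\eta=\eta(k,r,\delta)>0$ with $[r-\eta,r+\eta]\subseteq f_k(U)$ for all of them. If $\|g-f_k\|_\infty<\eta$, then $g(U)$ is an interval containing $r$, so $g^{-1}(r)$ meets every chosen $U$. Consequently
\[
N_{\delta}(g^{-1}(r)\cap E)\ge c_p\, N_\delta(f_k^{-1}(r)\cap E),
\]
where $c_p>0$ depends only on the dimension $p$. Thus a large box count of a level set of $f_k$ at one scale persists under small uniform perturbation.

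\textbf{Step 2: making the perturbation size uniform in $r$ (main obstacle).} Fix $j$ and $k$. For almost every $r\in f_k(E)$ we have $\overline{\dim}_B(f_k^{-1}(r)\cap E)\ge d$. So for such $r$ there is $\delta<1/j$ with
\[
N_\delta(f_k^{-1}(r)\cap E)\ge \delta^{-(d-1/(2j))}.
\]
By Step 1 we then get some $\eta>0$. The function $r\mapsto\eta$ so obtained has to be handled measurably; this is the technical core, where I would reduce to finitely many scales and finitely many members of $\mathcal{B}$, and then apply an Egorov/inner-regularity argument. The outcome should be an $\eta_{k,j}>0$ and a set $A_{k,j}\subseteq f_k(E)$ with $\lambda(f_k(E)\setminus A_{k,j})<2^{-j}$ such that for every $g$ with $\|g-f_k\|_\infty<\eta_{k,j}$ and every $r\in A_{k,j}$ there is some $\delta<1/j$ with
\[
N_\delta(g^{-1}(r)\cap E)\ge c_p\,\delta^{-(d-1/(2j))}.
\]

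\textbf{Step 3: controlling the range of $g$.} The set $f_k(E)$ is a countable union of intervals, one for each component of $E$. Shrinking $\eta_{k,j}$ further, we also ensure that $g(E)\subseteq B(f_k(E),\eta_{k,j})$ and that $\lambda\big(B(f_k(E),\eta_{k,j})\setminus f_k(E)\big)<2^{-j}$.

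\textbf{Step 4: the Baire argument.} Define
\[
\mathcal{G}=\bigcap_j\bigcup_k \big(B(f_k,\eta_{k,j})\cap\mathcal{U}\big).
\]
Each $\bigcup_k$ is open and contains every $f_k$, hence is dense in $\mathcal{U}$; so $\mathcal{G}$ is a dense $G_\delta$ subset of $\mathcal{U}$.

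Now fix $g\in\mathcal{G}$. For each $j$ choose $k_j$ with $g\in B(f_{k_j},\eta_{k_j,j})$, and set
\[
\mathrm{Bad}_j=g(E)\setminus A_{k_j,j}, \qquad \lambda(\mathrm{Bad}_j)<2\cdot 2^{-j}.
\]
By Borel--Cantelli, almost every $r\in g(E)$ lies outside $\mathrm{Bad}_j$ for all large $j$. For such $r$ there are arbitrarily small $\delta$ with $N_\delta(g^{-1}(r)\cap E)\ge c_p\,\delta^{-(d-1/(2j))}$ for every large $j$. Hence $\overline{\dim}_B(g^{-1}(r)\cap E)\ge d$, which gives \eqref{eqdfb_upper_boxdim_locally_connected}.
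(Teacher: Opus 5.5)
Your outline is the paper's proof in different notation. The countable base $\mathcal{B}$ of connected relatively open sets and the endpoint set $Z_k$ play the roles of the components $E_j^i$ of $E\cap V_j$ and of the excluded values $\inf_{E_j^i}f_k$, $\sup_{E_j^i}f_k$. Your Step~1 is the paper's radius $\rho_{k,m,r}$, with a harmless factor $3^{-p}$ because you count grid cubes rather than the balls of $\mathcal{V}_m$. Steps~3--4 are its range control and Borel--Cantelli argument.

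The step you leave open, Step~2, is exactly where the paper does its work. It writes $a_m(f_k^{-1}(r)\cap E)=\sum_{V\in\mathcal{V}_m}\mathbf{1}_{f_k(V\cap E)}(r)$, which is Borel because each $V\cap E$ is relatively open and hence has countably many components. It then defines explicit measurable radii and a measurable first good scale $m(n,k,r)$, and takes a superlevel set. An Egorov argument applied directly to $r\mapsto N_\delta(f_k^{-1}(r)\cap E)$ is not available as stated: with closed grid cubes and $F$ only Lebesgue measurable, this function can fail to be measurable.

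Your hint does close the step, and cleanly, if you phrase it through \emph{witnesses}. A witness consists of a dyadic $\delta<1/j$, a rational $\eta>0$, and finitely many pairs $(Q_i,U_i)$, $i\le N$. Here the $Q_i$ are distinct $\delta$-grid cubes, $N\ge\delta^{-(d-1/(2j))}$, and $U_i\in\mathcal{B}$ has diameter $<\delta$ and meets $Q_i$. There are countably many witnesses. Each certifies the interval of those $r$ with $[r-\eta,r+\eta]\subseteq\bigcap_i f_k(U_i)$: for such $r$ and $\|g-f_k\|_\infty<\eta$, $g^{-1}(r)$ meets every $U_i$, so $N_\delta(g^{-1}(r)\cap E)\ge 3^{-p}N$. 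Step~1 shows these intervals cover $f_k(E)$ up to a null set. By continuity of measure, finitely many of them cover $f_k(E)$ up to measure $2^{-j}$. Their union (intersected with $f_k(E)$) and the minimum of their $\eta$'s then serve as $A_{k,j}$ and $\eta_{k,j}$, and no measurability of $r\mapsto\eta$ is ever needed.

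There are two smaller points, both shared with the paper's proof.

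First, your opening reduction overlooks $g$ with $\lambda(g(E))=0$, for which $D_{\overline{B}*}^g(E)=0$ by definition. Also require $\lambda(A_{k,j})>0$. Then every $g\in B(f_k,\eta_{k,j})$ attains every value in $A_{k,j}$, so $\lambda(g(E))>0$ throughout $\mathcal{G}$.

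Second, in Step~3, $\lambda(B(f_k(E),\eta)\setminus f_k(E))$ tends to $\lambda(\overline{f_k(E)}\setminus f_k(E))$ as $\eta\to0$. This need not vanish when $E$ has infinitely many components: take $E=([0,1]\setminus K)\times\{0\}\subseteq\mathbb{R}^2$ with $K$ a fat Cantor set and $f_k(x,y)=x/2$. This is harmless when $E$ is connected, which is the case used later in the paper. In general, run Steps~1--4 separately on each of the countably many components $C$ of $E$, where $\lambda(g(C)\setminus f_k(C))\le 2\eta$, and intersect the resulting dense $G_\delta$ sets.
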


\begin{proof}[Proof of Lemma \ref{lemma:upper_box_generic_locally_connected}]

    The intersection of a locally connected and an open set, $E$ is locally connected as well. By separability, $E$ has countably many components. 
    If some $f_k$ is constant on all of these components,
    then the right hand side of \eqref{eqdfb_upper_boxdim_locally_connected} vanishes by definition and we have nothing to prove.
    Assume that it is not the case, then by continuity $\lambda(f_k(E))>0$ for all $k$.

    Since countable union of sets of measure zero is still of measure zero we can choose
    a measurable set $R_{0}\sse \R$ such that $\lll(\R\sm R_{0})=0$ and  for any $k$ 
    \begin{equation}\label{*lrdfkr}
    \dub^{f_{k}}(r,E)\ge  \inf_{k'\in\N} \dub^{f_{k'}}(E) \text{ for any $r\in R_{0}\cap f_{k}(E)$.}
    \end{equation}
    Fix an enumeration $\mathcal{V} = \{V_1, V_2, \dots\}$ of balls with rational center and radius. 
    By our assumption on local connectedness and separability, $E_j = E\cap V_j$ consists of countably many components for every $j$, which we denote by $E_j^1, E_j^2, \dots$. Thus we can also assume that
    $R_0$ does not contain $\min_{E_j^i}f_k$ and $\max_{E_j^i}f_k$ for any $k$ and component $E_{j}^i$ of the above form. (If these extreme values fail to exist for some indices $i, j, k$, then
    these indices simply do not impose restrictions on $R_0$.)

    The family $\mathcal{V}$ can be used to compute the box dimension of subsets of $E$ as follows: 
    we can create locally finite subfamilies $\mathcal{V}_n$ for $n=1, 2, \dots$ such that the balls in $\mathcal{V}_n$ still cover $\mathbb{R}^p$,
    all of them have radius $2^{-n}$ and any ball $V\in \mathcal{V}_n$ intersects at most $K$ others in $\mathcal{V}_n$ for some constant $K$ depending only on the dimension, $p$. Then if we denote 
    by $A_n(H)$ the set of elements of $\mathcal{V}_n$ intersected by a bounded set $H\subseteq \mathbb{R}^p$ and by $a_n(H)$ the cardinality of $A_n(H)$,
    $$\liminf \frac{\log a_n(H)}{n\log 2} = \underline{\dim}_B(H), \quad \limsup \frac{\log a_n(H)}{n\log 2} = \overline{\dim}_B(H).$$

Suppose $k\in \N$ is fixed, moreover $r\in R_0\cap f_{k}(E)$ and $m\in\N $ are given, we will think of them as variables and on $\N$ we will use the discrete topology.


There is a radius $\rrr_{k,m,r}>0$ such that for any $g \in B(f_k, \rrr_{k, m, r})$, we have that for $V_j\in \mathcal{V}_{m}$, the containment
    $V_j\in A_{m}(f_k^{-1}(r)\cap E)$ implies $V_j\in A_{m}(g^{-1}(r)\cap E)$. 
    Indeed, if $V_j\in A_{m}(f_k^{-1}(r)\cap E)$, we can find a component $E_j^i$ such that $r \in \inte f_k(E_j^i)$, thus there exists
    a radius $\rrr_{k,m,r,j}>0$ such that $V_j\in A_{m}(g^{-1}(r)\cap E)$ for $g\in B(f_k, \rrr_{k, m, r,j})$.
    As the set 
    $A_{m}(f_k^{-1}(r)\cap E)$ is finite, 
    $$\rrr_{k, m, r} = \min_{j:V_j\in A_{m}(f_k^{-1}(r)\cap E)} \rrr_{k,m,r,j}$$
    is a suitable choice. Hence $a_{m}(f_k^{-1}(r)\cap E)\leq a_{m}(g^{-1}(r)\cap E)$.

    We prescribe these radii explicitly to assure measurability. That is for fixed $j$, let
    $$\rrr_{k,m,r,j} = \frac{1}{2}\sup_{i :r \in \inte f_k(E_j^i)}\min \{|r- \inf_{x\in E_j^i} f_k(x)|, |r- \sup_{x\in E_j^i} f_k(x)|, 1\},$$
    which,  as the supremum of countably many continuous functions, is a positive measurable function of $r\in R_0 \cap f_k(E)$ and $m\in \N$. 
    The definition of $\rrr_{k, m, r}$ becomes valid with $$\rrr_{k, m, r} = \min_{j:V_j\in \mathcal{V}_{m},\ V_{j}\in A_{m}(f_k^{-1}(r)\cap E)}\rrr_{k,m,r,j},$$ which is
    a positive measurable function as the minimum of a finite set of positive measurable functions.

Observe that
\begin{equation}\label{*meas1} 
a_{m}(f_k^{-1}(r)\cap E)=\sum_{V\in \mathcal{V}_m}\mathbf{1}_{f_{k}(V\cap E)}(r)\end{equation}
is a measurable function of $m$ and $r$: as $E$ is locally connected, $V\cap E$ has countably many components, and hence $f_k(V\cap E)$ is the
union of countably many intervals.

From \eqref{*meas1} it follows that $\log(a_{m}(f_k^{-1}(r)\cap E))/m$ is also a measurable function of $m$ and $r$. 

Suppose that $D_1<\inf_{k\in\N} \dub^{f_k}(E)$ is fixed.

Given $n\in \N$ we put
\begin{equation}\label{*meas2} 
m(n,k,r)=\min  \Big \{ m\geq n:  \frac{\log(a_{m}(f_k^{-1}(r)\cap E))}{m \cdot \log 2} > D_{1} \Big \}.
\end{equation} 
For fixed $k$ it is also a measurable function of $n\in\N$ and $r\in R_0\cap f_{k}(E)$. By our assumptions it is finite on $R_{0}\cap f_{k}(E)$.

Therefore for fixed $k$ and $n$, the function $\rrr(k,m(n,k,r),r)$ is a measurable function of $r$.


    By taking a suitable superlevel set of $\rrr(k,m(n,k,r),r)$, we get a measurable set $R_{k, n}$ such that $\lambda(R_{k,n})>\lambda(f_k(E))-2^{-n}$, and 
    $\rrr_{k, n} :=\inf_{r\in R_{k, n}}\rrr(k,m(n,k,r),r)>0$. As $f_k(E)$ is the union of countably many intervals, by further reducing $\rrr_{k, n}$, we can assume that 
    for any $f \in B(f_k, \rrr_{k, n})$, we have that 
    $$\lambda(f(E) \setminus f_k(E)) < 2^{-n},$$
    and hence
    $$\lambda(f(E) \setminus R_{k,n}) < 2^{-(n-1)}.$$
    Moreover, for any $r\in R_{k,n}$ we have
    $$\frac{\log{a_{m(n,k,r)}(f^{-1}(r)\cap E)
    }}{m(n,k,r) \cdot \log 2}\geq \frac{\log{a_{m(n,k,r)}(f_k^{-1}(r)\cap E)
    }}{m(n,k,r) \cdot \log 2} > D_1.$$
    
    Now let $\cag_{n}=\bigcup_{k}B(f_{k},\rrr_{k,n})\cap \cau$, and
    $\cag =\bigcap_{n}\cag_{n}$.

    Suppose $f\in \cag.$ 
    Then there exists a sequence $k_{n}$ such that $f\in B(f_{k_{n}},\rrr_{k_{n},n})$ for every $n$. Consequently, 
    $$\lambda(f(E) \setminus R_{{k_n}, n}) < 2^{-(n-1)}.$$
    Thus by the Borel--Cantelli lemma, almost every $r\in f(E)$ is contained in infinitely many $R_{{k_n}, n}$. That is, 
    $$\frac{\log{a_{m(n,k,r)}(f^{-1}(r)\cap E)}}{m(n,k,r) \cdot \log 2}\geq D_1$$
    for infinitely many $m(n,k,r)$. Hence $\dub^{f}(r,E)\geq D_1$ for almost every $r\in f(E)$.
    As $D_1<\inf_{k\in\N} \dub^{f_k}(E)$ was chosen arbitrarily, the lemma is proven.
    
\end{proof}

Simply plugging in the definition of $D_{\overline{B}*}^{f_k}(E)$, for $E=F$ and $\cau=C_1^{\alpha}(F)$, we end up with the following well-applicable statement:

\begin{corollary} \label{corollary:upper_box_generic_locally_connected}
    Suppose that $0< \aaa\leq 1$,  $F\subset\R^p$ is a locally connected, bounded, measurable set.
    If $\{f_1,f_2,\ldots\}$ is a countable dense subset of $C_1^\alpha(F)$, and for every $k$ and almost every $r\in f_k(F)$ we have
    $$\overline{\dim}_B f_k^{-1}(r)\geq D,$$
    then there is a dense $G_\delta$ subset $\cag$ of $C_1^{\alpha}(F)$ such that for every $f\in \cag$ and almost every $r\in f(F)$ we have
    $$\overline{\dim}_B f^{-1}(r)\geq D.$$
\end{corollary}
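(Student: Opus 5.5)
This follows from Lemma \ref{lemma:upper_box_generic_locally_connected} with $E=F$ and $\cau=C_1^\alpha(F)$; the only task is to translate the essential-infimum hypothesis into the pointwise conclusion. First I check that $E=F$ is admissible. The lemma is phrased for $E=F\cap H$ with $H$ a nonempty open set, so I take $H$ to be an open ball containing the bounded set $F$. Then $E=F$, and $C_1^\alpha(F)$ is trivially open in itself.

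Next I verify that the hypothesis gives $D_{\overline{B}*}^{f_k}(F)\geq D$ for every $k$. If $\lambda(f_k(F))=0$, the definition sets $D_{\overline{B}*}^{f_k}(F)=0$. This is harmless: in the proof of Lemma \ref{lemma:upper_box_generic_locally_connected}, when some $f_k$ has $\lambda(f_k(F))=0$ the right-hand side vanishes, while the hypothesis for that $k$ is vacuous. To avoid this degenerate case, I discard such $f_k$ from the sequence. This is legitimate because functions $f$ with $\lambda(f(F))=0$ are constant on each component of $F$ (components are countably many by local connectedness and separability, and a continuous function on a connected set with null image is constant). Perturbing such an $f$ slightly, for example adding a small multiple of a nonconstant $1$-Hölder function and rescaling, shows that the remaining $f_k$ are still dense. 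If every function is constant on components, the statement is vacuous anyway.

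When $\lambda(f_k(F))>0$, suppose some $d<D$ had $\lambda\{r\in f_k(F): D_{\overline{B}}^{f_k}(r,F)\leq d\}>0$. This would contradict $\overline{\dim}_B f_k^{-1}(r)\geq D$ for a.e.\ $r\in f_k(F)$. Hence the infimum defining $D_{\overline{B}*}^{f_k}(F)$ is at least $D$, so $\inf_k D_{\overline{B}*}^{f_k}(F)\geq D$.

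Lemma \ref{lemma:upper_box_generic_locally_connected} then yields a dense $G_\delta$ set $\cag$ with $D_{\overline{B}*}^f(F)\geq D$ for all $f\in\cag$. However, an essential infimum lower bound does not by definition give the a.e.\ statement, since $D_{\overline{B}*}^f(F)\geq D$ only excludes positive-measure sets with dimension $\leq d$ for each $d<D$. It does give the a.e.\ statement after taking a countable union over $d=D-1/j$: the set $\{r: D^f_{\overline{B}}(r,F)<D\}$ is the union of the null sets $\{r: D^f_{\overline{B}}(r,F)\le D-1/j\}$, hence null.

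The measurability implicit here is an obstacle only in appearance, because the proof of the lemma directly gives $\dub^f(r,E)\geq D_1$ for a.e.\ $r\in f(E)$ for every $D_1<\inf_k\dub^{f_k}(E)$. So the cleanest route is to cite the lemma's proof, which builds $\cag$ in a way that does not depend on $D_1$, and apply it with $D_1=D-1/j$ for each $j$; a countable union of null exceptional sets then gives the conclusion. I expect the main care to be needed in handling the degenerate case $\lambda(f_k(F))=0$ correctly.
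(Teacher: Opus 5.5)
Your argument is the paper's own. The paper obtains the corollary in one line by plugging $E=F$ and $\mathcal{U}=C_1^\alpha(F)$ into Lemma~\ref{lemma:upper_box_generic_locally_connected} and unwinding the definition of $D_{\overline{B}*}^{f}$. Your treatment of the $f_k$ with $\lambda(f_k(F))=0$ fills a point that this one-line deduction skips: for such $f_k$ the hypothesis is vacuous, yet $D_{\overline{B}*}^{f_k}(F)=0$.

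Two small fixes are needed.

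First, discarding those $f_k$ leaves a dense sequence only because the degenerate set is \emph{closed}. Density of its complement, which is what your perturbation shows, would not suffice on its own. Closedness holds because the degenerate set is exactly the set of functions constant on every component of $F$, and that set is closed under uniform limits.

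Second, your last paragraph is inaccurate: the $\mathcal{G}$ built in the lemma's proof does depend on $D_1$, through $m(n,k,r)$ and hence through $R_{k,n}$ and the radii. On that route you must take $\bigcap_j \mathcal{G}_{D-1/j}$, which is still a dense $G_\delta$ by Baire. Your black-box use of the lemma's statement in the preceding paragraph is unaffected by this.
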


We note that Lemma \ref{lemma:upper_box_generic_locally_connected} implies the following theorem, in a similar manner to how Lemma \ref{lemma:upper_box_generic} implies Theorem \ref{thm:upper_box_generic}.

\begin{theorem} \label{*thm36}  \label{thm:upper_box_generic_locally_connected}
    Suppose that $0< \aaa\leq 1$,  $F\subset\R^p$ is a locally connected, bounded, measurable set.
    Then there is a dense $G_\delta$ subset $\cag$ of $C_1^\alpha(F)$ 
    such that for every $f\in\cag$ we have $D_{\overline{B}*}^f(F) = D_{\overline{B}*}(\alpha,F)$.   
\end{theorem}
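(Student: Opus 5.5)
We will follow the route by which Theorem \ref{thm:upper_box_generic} is derived from Lemma \ref{lemma:upper_box_generic} in \cite{BUCZOLICH2024531}, using Lemma \ref{lemma:upper_box_generic_locally_connected} in place of the latter. Set $D=D_{\overline{B}*}(\alpha,F)$. The inequality $\le$ needs only a small genericity argument. By \eqref{def:generic_upper}, for each $j$ there is $\cag_j'\in\mg_{1,\alpha}$ with $\sup_{f\in\cag_j'}D_{\overline{B}*}^f(F)<D+1/j$. The set $\cag'=\bigcap_j\cag_j'$ is again a dense $G_\delta$ set, by Baire's theorem in the complete metric space $C_1^\alpha(F)$. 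Every $f\in\cag'$ therefore satisfies $D_{\overline{B}*}^f(F)\le D$.

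For the inequality $\ge$ we argue by contradiction, localizing both in function space and in $F$. Suppose no dense $G_\delta$ set works. Then we expect to find an open set $\cau\subseteq C_1^\alpha(F)$ and a countable dense set $\{f_k\}\subseteq\cau$ with $\sup_k D_{\overline{B}*}^{f_k}(F)<D$. More precisely, the plan is as follows. Fix rational $d<D$. Let $\mathcal{L}_d$ be the set of $f$ with $D_{\overline{B}*}^f(F)\le d$. If $\mathcal{L}_d$ fails to be nowhere dense, so that it is dense in some open $\cau$, we pick a countable dense subset $\{f_k\}$ of $\mathcal{L}_d\cap\cau$.

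Next we localize in $F$. For each $f_k$ there is a positive-measure set of levels $r$ with $\overline{\dim}_B f_k^{-1}(r)\le d$. Upper box dimension of a finite union is the maximum over the pieces. Hence, for any $H$ from the countable family $\mathcal{V}$ of rational balls, the level sets of $f_k$ restricted to $E=F\cap H$ also have upper box dimension at most $d$ at these levels. The goal is to apply Lemma \ref{lemma:upper_box_generic_locally_connected} to $E=F\cap H$ and $\cau$ in a form controlling the essential infimum on a fixed positive-measure set of values. Taking, as in the proof of that lemma, sets $R_0$ adapted to a component structure, we should get a dense $G_\delta$ set $\mathcal{H}\subseteq\cau$ on which $D_{\overline{B}*}^f(F\cap H)$ is controlled by $d$ from the appropriate side. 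Combining the two sides, we obtain a dense $G_\delta$ subset of $\cau$ on which $D_{\overline{B}*}^f\le d<D$. Gluing it with $C_1^\alpha(F)\setminus\overline{\cau}$ and intersecting with $\cag'$ yields an element of $\mg_{1,\alpha}$ whose supremum is strictly below $D$ when restricted near $\cau$.

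To finish, we run a countable exhaustion over the rational values $d$ and a countable base of open sets in $C_1^\alpha(F)$. This should give a single dense $G_\delta$ set on which $D_{\overline{B}*}^f(F)$ is constant. By the definition of $D$ as an infimum over $\mg_{1,\alpha}$ of suprema, that constant must equal $D$.

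The main obstacle is this last step. The quantity $D_{\overline{B}*}^f$ is an essential infimum, so it behaves well under taking lower bounds generically, which is exactly what Lemma \ref{lemma:upper_box_generic_locally_connected} provides. Proving that the generic value is \emph{constant}, rather than merely bounded between inf and sup, requires a homogenization step. The plan is to show that the set of $f$ with $D_{\overline{B}*}^f(F)\ge d$ contains a dense $G_\delta$ set in every open $\cau$ in which it is dense, by applying Lemma \ref{lemma:upper_box_generic_locally_connected} with that dense subset. A $0$–$1$ dichotomy then follows: for each rational $d$, either $\{f: D_{\overline{B}*}^f(F)\ge d\}$ is residual, or its complement contains an open set. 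We then use local connectedness, via perturbations supported in a small ball $H$, to rule out the second alternative for $d<D$. This is where we expect the real work to lie.
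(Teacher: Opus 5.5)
Your upper-bound step is fine: intersecting near-optimal dense $G_\delta$ sets gives $D_{\overline{B}*}^f(F)\le D$ generically. Your final reduction is also correct. By Lemma \ref{lemma:upper_box_generic_locally_connected} with $E=F$, if $\{f: D_{\overline{B}*}^f(F)\ge d\}$ is dense in an open set $\mathcal{U}$, then it contains a dense $G_\delta$ subset of $\mathcal{U}$.

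The middle part, however, runs that lemma backwards. The lemma only transfers \emph{lower} bounds from a countable dense family to generic functions, so a dense family with $D_{\overline{B}*}^{f_k}\le d$ yields no generic upper bound. And if all of $\mathcal{U}$ already has $D_{\overline{B}*}^f<d$, you do not need the lemma at all. Moreover, even a dense $G_\delta$ set whose members have $D_{\overline{B}*}^f\le d<D$ near $\mathcal{U}$ does not contradict \eqref{def:generic_upper}. That definition only makes $D$ the supremum, over nonempty open $\mathcal{V}\subseteq C_1^\alpha(F)$, of the analogous $\inf\sup$ quantity computed inside $\mathcal{V}$. This is compatible with a smaller value on some open set.

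So everything rests on the step you postpone: for $d<D$, no nonempty open set of functions has $D_{\overline{B}*}^f(F)<d$. You give no argument for it. The paper does not give one either; it only points to the deduction of Theorem \ref{thm:upper_box_generic} from Lemma \ref{lemma:upper_box_generic}, using that small sets $F\cap H$ cover $F$.

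This step cannot come from local connectedness or perturbations in small balls, because under the stated hypotheses it is false. Take $F=Q\cup S$ with $Q=[0,1]^2$ and $S=[1,2]\times\{0\}$. This set is compact, connected and locally connected.
\begin{itemize}
\item The set $\mathcal{U}_1=\{f:\max_S f>\max_Q f\}$ is open and nonempty; it contains $f(x,y)=x^\alpha$. For $f\in\mathcal{U}_1$, every value in the interval $(\max_Q f,\max_S f)$ is attained, and only on $S$. Corollary \ref{corollary:lower_box_dim_upper_bound} applied to $S$ then gives $D_{\overline{B}*}^f(F)\le 1-\alpha$.
\item The set $\mathcal{U}_2=\{f: f(S)\subseteq\operatorname{int} f(Q)\}$ is open and nonempty. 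For $f\in\mathcal{U}_2$ we have $f(F)=f(Q)$ and $f^{-1}(r)\supseteq (f|_Q)^{-1}(r)$.
\item The restriction map $C_1^\alpha(F)\to C_1^\alpha(Q)$ is open: extend $g$ by Theorem \ref{thm:Grunb} and truncate between $f-\eta$ and $f+\eta$. Hence generic $f$ restrict to generic functions on $Q$.
\item Theorem \ref{thm:gen_upper_box_precise} for the square therefore gives $D_{\overline{B}*}^f(F)=2-\alpha$ for generic $f\in\mathcal{U}_2$.
\end{itemize}
Consequently $D_{\overline{B}*}(\alpha,F)=2-\alpha$. But every dense set meets $\mathcal{U}_1$, where the value is at most $1-\alpha$, so no dense $G_\delta$ set satisfies the conclusion.

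Dyadic squares show that this $F$ also has nice connection type, so the same example bears on Theorem \ref{thm:upper_box_generic} as quoted. Proving a statement of this kind requires a genuine homogeneity hypothesis on $F$. An example is the self-similarity in Theorem \ref{thm:gen_upper_box_precise}, whose proof identifies the generic value $s-\alpha$ directly.
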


As the proof is very similar to deducing Theorem \ref{thm:upper_box_generic} from Lemma \ref{lemma:upper_box_generic} and we will not even directly use Theorem \ref{*thm36} in this paper,
we do not provide the details, we just highlight the differences. These come from Lemmata \ref{lemma:upper_box_generic}-\ref{lemma:upper_box_generic_locally_connected} 
concerning different sets of the form $E=F\cap H$. However,
we only use that such sets of arbitrarily small diameter can be used to cover $F$, which is clearly true in the setup of Lemma \ref{lemma:upper_box_generic_locally_connected} as well.

\section{Approximating affine functions} \label{sec:lemmas}

In light of Corollary \ref{corollary:upper_box_generic_locally_connected}, we will quite quickly reduce Theorem \ref{thm:gen_upper_box_precise} to the question whether
any $C_1^{\alpha}(F)$ function $h$ is arbitrarily well-approximable with $C_1^{\alpha}(F)$ functions which have level sets of dimension about $s-\alpha$. Considering a piecewise affine
approximation first (to be detailed in Section \ref{sec:main}), by self-similarity, a natural building block of this approximability result is that if $h$ is affine, 
such an approximation can be constructed. This approximation will be established by Lemma \ref{lemma:approx_affine_functions}, using the auxiliary functions constructed in Section 
\ref{subsec:auxiliary_functions}.

\begin{lemma} \label{lemma:approx_affine_functions}
    Let $F\sse \R^p$ be a connected self-similar set with lower box dimension $s>1$ and fix a typically oriented affine function $h:\R^p\to\R^p$. 
    
    Then for any $n, m$ we have $\|\phi_{n, m}\circ h -h\| <\frac{1}{m}$ and almost every non-empty level set of $\phi_{n, m}\circ h$ has upper box dimension at least $s-\alpha_{n, m}$.
\end{lemma}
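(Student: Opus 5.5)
The sup-norm estimate is immediate from $\|\phi_{n,m}-\Id_{\mathbb{R}}\|_\infty\le \frac1m$, applied pointwise at $h(x)$. (We read $h$ as real-valued, $h:\R^p\to\R$.) The real content is the dimension bound. For a value $r$, the level set decomposes as
$$(\phi_{n,m}\circ h)^{-1}(r)\cap F=\bigcup_{t\in\phi_{n,m}^{-1}(r)} \bigl(h^{-1}(t)\cap F\bigr),$$
a union of hyperplanar slices of $F$, all parallel to a fixed $W\in\mathcal{W}_F$. I will estimate the box-counting numbers of this union at scales comparable to $(nm)^{-k}$. Rescaling $h$ affinely, I will assume $\nabla h$ has length~$1$. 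This changes only constants: $\phi_{n,m}\circ h$ with a general gradient is conjugated by an affine change of the parameter $t$, which does not affect box dimension estimates.

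The first step is to find many well-separated slices of dimension $s-1$. By Theorem~\ref{prop:wenxi}, for almost every $t\in h(F)$ we have $\dim(F\cap h^{-1}(t))=s-1$, where $\dim$ is the upper box dimension. Since $F$ is self-similar and the similarities have finitely many directions, the same statement holds for slices of each cylinder $S_\omega(F)$ in directions in the finite orbit of $W$. To get a covering count that is uniform in $t$ rather than only a dimension, I will use a Fubini/Egorov-type argument. For $\varepsilon>0$ there is a set $T_\varepsilon\subseteq h(F)$ of positive measure and infinitely many scales $\delta$ at which $N_\delta(F\cap h^{-1}(t))\ge \delta^{-(s-1-\varepsilon)}$ for $t\in T_\varepsilon$. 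This needs a little care, because $\limsup$ statements are not uniform. I would handle it either by a density-point argument or by passing to scales along which a positive proportion of $t$ work.

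The second step is counting. Fix $r$ and a level $k$. The set $\phi_{n,m}^{-1}(r)$ meets $n^k$ intervals of length $(nm)^{-k}$, one in each element of $\mathcal{I}_{I,k}$, where $I\ni r$ is the grid interval from $\mathcal{I}_k$ containing $r$. By Lemma~\ref{lemma:measure_multiplying}, $\phi_{n,m}^{-1}(V)\cap I'$ has relative measure $\lambda(V)/\lambda(I)$ inside each $I'$. Averaging over $r$ and using Fubini, for almost every $r$ a positive proportion (bounded below uniformly over $k$ along a subsequence) of the $n^k$ points $t\in\phi_{n,m}^{-1}(r)$ lie in $T_\varepsilon$. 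These points are $(nm)^{-k}$-separated, up to adjacency, so their slices are disjoint at scale $\delta=(nm)^{-k}$ up to bounded multiplicity. Each such slice needs at least $\delta^{-(s-1-\varepsilon)}$ boxes. Hence the total count is at least
$$c\,n^k (nm)^{k(s-1-\varepsilon)}=c\,(nm)^{k(s-\alpha_{n,m}-\varepsilon)},$$
using $n=(nm)^{1-\alpha_{n,m}}$. Letting $\varepsilon\to0$ gives $\overline{\dim}_B\ge s-\alpha_{n,m}$.

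The main obstacle is aligning the two families of scales. The slice lower bound holds only along some (possibly $t$-dependent) sequence of scales, while the slice separation is tied to the grid $(nm)^{-k}$. To handle this, I would count at a scale $\delta$ coming from the slice estimate and choose $k$ with $(nm)^{-k}\approx\delta$. Alternatively, I would exploit the self-similarity \eqref{*konjeq} of $\phi_{n,m}$ together with self-similarity of $F$: inside $h^{-1}(J)$ for a grid interval $J$, a cylinder of $F$ of comparable size reproduces the whole picture. This yields a Borel--Cantelli/ergodic argument showing that for almost every $r$, infinitely many $k$ have a positive proportion of good slices at the correct scale.
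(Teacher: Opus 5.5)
There is a genuine gap, and it sits exactly where you place ``the main obstacle''. You invoke Theorem~\ref{prop:wenxi} only for the upper box dimension of slices. That gives each $t$ its own sequence of good scales, and nothing forces a common one. Put $G_k=\{t: N_k(F\cap h^{-1}(t))\ge (nm)^{k(s-1-\varepsilon)}\}$. All you know is that $\limsup_k G_k$ has full measure, and this is compatible with $\lambda(G_k)\to0$ (think of typewriter sequences). In that case every set lying in infinitely many $G_k$ is null, so your set $T_\varepsilon$ of positive measure need not exist. Neither suggested repair is carried out: matching $\delta$ to $(nm)^{-k}$ fails because the good scales depend on $t$, and the ergodic Borel--Cantelli argument is only named. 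Your second step fails even if such a $T_\varepsilon$ is granted. Since $\phi_{n,m}$ maps each grid interval of length $1/m$ onto itself, $\phi_{n,m}^{-1}(r)$ lies in the one containing $r$. So if $T_\varepsilon$ sits in a single such interval, values $r$ elsewhere have no preimages in $T_\varepsilon$ at all. Averaging via Lemma~\ref{lemma:measure_multiplying} controls only the $r$-average of the count, not almost every $r$.

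The missing idea is that Theorem~\ref{prop:wenxi} also holds for the lower box dimension. Hence for a.e.\ $a$ the bound $N_k(W_a\cap F)\ge (nm)^{k(s-1-\rho)}$ holds for \emph{all} large $k$. By continuity of measure there is then $k(\rho)$ and a set $H$ with $\lambda(H)>1-\rho$ on which the bound holds for every $k\ge k(\rho)$. The paper first reduces to $h(F_J)=[0,1]$ via connectedness and \eqref{*konjeq}. With the nearly full-measure set $H$ it runs a two-level pigeonhole:
\begin{itemize}
\item most $I\in\mathcal{I}_k$ have most $I'\in\mathcal{I}_{I,k}$ satisfying $\lambda(I'\cap H)>(1-\rho^{1/3})(nm)^{-k}$;
\item by the measure-multiplying property, most $r\in I$ then lie in $\phi_{n,m}(I'\cap H)$ for at least $(1-\rho^{1/10})n^k$ such $I'$.
\end{itemize}
At each level $k$ this yields a set $R_\rho$ with $\lambda(R_\rho)\ge 1-\rho^{1/10}$ on which your count $\gtrsim n^k(nm)^{k(s-1-\rho)}$ holds. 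Taking $\rho_l\to0$ and $k_l\to\infty$, a.e.\ $r$ lies in infinitely many $R_{\rho_l}$, which gives the upper box bound.

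Your separation-and-counting step and the identity $n=(nm)^{1-\alpha_{n,m}}$ match the paper. One minor point: normalizing $|\nabla h|=1$ is not justified as stated, because $\phi_{n,m}(ct)$ is not an affine reparametrization of $\phi_{n,m}(t)$ for general $c$; only the grid rescalings of \eqref{*konjeq} are available. Carrying the Lipschitz constant $M_h$ through the separation step, as the paper does, avoids this.
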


\begin{proof}
    As $\|\phi_{n, m} - \Id_{\mathbb{R}}\|_{\infty}\leq \frac{1}{m}$, the first statement is obvious.

In case $h$ is constant the other statement is obvious. Hence we can suppose that $h$ is non-constant.

  For a nonnegative integer 
    $k$ by a $k$-level grid interval we will always mean some interval $\left[\frac{b}{(nm)^k} , \frac{b+1}{(nm)^k}\right]$ for an integer $b$. 
 
    A level $k$ grid cube is the cartesian product of level $k$-level grid intervals. We will use these to estimate box dimensions, and use the customary 
    notation $N_k(V)$ for the number of level $k$
    grid cubes intersected by a set $V$.

    Put $g_{n, m, h} = \phi_{n, m}\circ h$. Modulo its endpoints, the closed interval $h(F)$
    can be written as a countable union of non-overlapping grid intervals of the form $ \left[\frac{b}{m^k} , \frac{b+1}{m^k}\right]$. 
For any such interval, $J'$ select an arbitrary grid interval $J$ of length $(nm)^{-k}$, such that $\phi_{n,m}(J)=J'$.
    For such an interval $J$,
    let $F_{J} = F\cap h^{-1}(J)$. As $h$ is affine on $F$, $F_{J}$ is a piece of $F$ cut by parallel hyperplanes. 
    Since $F$ is connected none of these planes can cut $h(F)$ into two pieces and hence $h(F_{J})=J$.
    Showing that inside $F_{J}$ almost every level set of $g_{n, m, h}$ has dimension at least $s-\alpha_{n, m}$ is sufficient, 
    as almost every level set intersects
    one of the countably many $F_{J}$s.

    Note that in $F_{J}$, by \eqref{*konjeq}, we have 
    $$g_{n, m, h}=\phi_{n, m}\circ h = \psi_{J}^*\circ\phi_{n, m}\circ \psi_{J}\circ h,$$
    where $\psi_{J}\circ h$ is affine and $(\psi_{J}\circ h)(F_{J})=[0, 1]$. 
    As applying the affine map $\psi_{J}^*$ does not alter the box dimension of almost every non-empty level set,
    it suffices to consider the box dimension of level sets of $\phi_{n, m}\circ (\psi_{J}\circ h)=g_{n, m, \psi_{J}\circ h}$. Thus abusing the notation
    and denoting $\psi_{J}\circ h$ by $h$ instead, without loss of generality, we can assume that already $h(F_{J})=[0, 1]$. For notational simplicity, let us do so. 
    
    Let $W_a$ be the hyperplane for which $h(W_a\cap F_J) = a$. As $h$ is typically oriented, for almost every $a\in h(F_J)=J$ we have
    $\dim_B (W_a \cap F_J) = s -1$ according to the box dimension version of \Cref{prop:wenxi}.

    By the continuity of the Lebesgue measure, for any $\rho>0$ there exists some $k(\rho)$ such that for $k\geq k(\rho)$ 
    there exists some $H=H(k, \rho)\subseteq J$ such that $\lambda(H)>1-\rho$ 
    and for any $a\in H$ we have for the number $N_k(W_a\cap F_J)$ of level $k$ grid cubes intersecting $W_a\cap F_J$ 
    $$N_k(W_a\cap F_J) \geq (nm)^{k(s-1-\rho)}.$$
Note that the measurability of $H(k, \rho)$ 
    follows from the measurability of $a\mapsto N_k(W_a\cap F_J)$: this mapping is actually upper semicontinuous as $F_J$ is closed. In the sequel several sets will depend on $\rho$ and $k$ and at several places for ease of notation we will not make this dependence explicit.

 We will select a $\rho=\rho_{l}\in(0,1/l)$ which is sufficiently small, in fact it satisfies our assumptions after \eqref{*BIveq} and \eqref{eq:no_of_intersected_cubes1}.
 After this we also select and fix a $k_{l}\geq \max\{ l, k(\rho_{l})\}$.
  For ease of notation almost until the end of this proof we will just use the notation $\rho$ and $k$ instead of $\rho_l$ and $k_{l}$, since we will assume almost until the end that $l$ is fixed.

    We recall some notation and observations from Section \ref{subsec:auxiliary_functions}: consider the intervals $\left[\frac{i}{m^k}, \frac{i+1}{m^{k}}\right]$ for $i\in\mathbb{Z}$, denote their family by $\mathcal{I}_k$.
    To any $I\in\mathcal{I}_k$, $n^k$ different intervals of length $(nm)^{-k}$ are mapped by $\phi_{n, m}$. Denote the family of these intervals by $\mathcal{I}_{I, k}$. 
    \medskip

    We claim that for at least $(1-\rho^{1/3})m^k$ choices
    of $I\in \mathcal{I}_k$, there are at least $(1-\rho^{1/3})n^k$ choices of $I'\in \mathcal{I}_{I, k}$, which satisfy $\lambda(I'\cap H) > (1-\rho^{1/3})(nm)^{-k}$. (We say that such $I, I'$ are $\rho$-good.)
    
    Proving this claim is essentially a pigeonhole principle. Indeed, otherwise for at least $\rho^{1/3}m^k$ choices of $I\in\mathcal{I}_k$, we have $\lambda(I'\cap H^c)\geq \rho^{1/3}(nm)^{-k}$ for at least $\rho^{1/3}n^k$ choices of $I'\in \mathcal{I}_{I, k}$,
    altogether implying that 
    $\lambda(H^c)=\sum_{I\in \mathcal{I}_k}\sum_{I'\in\mathcal{I}_{I, k}}\lambda(I'\cap H^c) \geq \rho,$ 
    contradicting $\lambda(H)>1-\rho$.

    Fix now a $\rho$-good $I \in \mathcal{I}_k$, and let $\mathcal{I}'_{I, k}$ denote the set of $\rho$-good $I'$ in $\mathcal{I}_{I, k}$.
    Recalling Lemma \ref{lemma:measure_multiplying}, all intervals in $\mathcal{I}'_{I, k}$ are mapped to $I$ in a measure-multiplying way by $\phi_{n, m}$, that is $\lambda(V)=n^k\lambda(\phi_{n, m}^{-1}(V)\cap I')$ for any 
    $V\subseteq I$ measurable. 
    Using the notation $B_{I'} = \phi_{n, m}(I'\cap H)$, this yields that
    $$\lambda(B_{I'}) \geq n^k\lambda(I'\cap H) > (1-\rho^{1/3})m^{-k},$$
    thus
    \begin{equation} \label{eq:pigeonhole_prep}
    \sum_{I'\in\mathcal{I}'_{I, k}}\lambda(B_{I'})\geq (1-\rho^{1/3})^2 m^{-k}n^k .
    \end{equation}
Since $H$ is measurable and an interval $I'$ is also measurable it is clear that $B_{I'}$ is also Lebesgue measurable. 
Therefore the set $$H'= \Big \{r :  \sum_{I'\in\mathcal{I}'_{I, k}}\chi_{B_{I'}}(r)\geq (1-\rho^{1/10})n^k \Big \}$$
is also measurable. Observe that $H'$ equals the set of those $r$  which are covered by $B_{I'}$ for at least
    $(1-\rho^{1/10})n^k$ elements of $\mathcal{I}'_{I, k}$. 

    As all the sets $B_{I'}$ are subsets of the interval $I$ of measure
    $m^{-k}$ next we show that
\begin{equation}\label{*lllhvest} 
 \lll(H')\geq(1-\rho^{1/10})m^{-k}.
\end{equation}

 Indeed, otherwise points of a set of measure $\rho^{1/10} m^{-k}$ are covered less than $(1-\rho^{1/10})n^k$ times, while the rest, of measure $(1- \rho^{1/10})m^{-k}$  is covered at most $n^k$ times, 
    implying that
\begin{equation}\label{*BIveq} 
   \sum_{I'\in\mathcal{I}'_{I, k}}\lambda(B_{I'}) \leq \rho^{1/10} m^{-k} (1-\rho^{1/10})n^k+ (1- \rho^{1/10})m^{-k}n^k 
\end{equation}
   $$ =(1-\rho^{2/10})m^{-k}n^{k}<
 (1-\rho^{1/3})^2 m^{-k}n^k,$$
where the last inequality holds  for small values of $\rho>0$, and at the beginning we were selecting a $\rho=\rho_{l}$ such that it holds, observe that this last inequality is not depending on $k$.   This contradicts \eqref{eq:pigeonhole_prep}.

Denote by $A$ the set of those $a$ for which 
the hyperplane $W_{a}$ is mapped by $\phi_{n,m}\circ h =
    g_{n, m, h}$ onto an element $r\in H'$.
 By our assumptions $A$ intersects at least $(1-\rho^{1/10})n^k$ distinct intervals of length $(nm)^{-k}$, and any $W_a\in \mathcal{W}$
    is such that 
    $$N_k(W_a\cap F_J) \geq (nm)^{k(s-1-\rho)}.$$
    
Recall that $h$ is non-constant and affine. Denote by $M_h$ its Lipschitz constant, which also equals the length of its gradient. Select different $a,a'\in A$ and $x\in W_{a}$, $x'\in W_{a'}$ such that $|x-x'|$ equals the distance between the parallel hyperplanes $W_{a}$ and $W_{a'}$.
Then $a=h(x)$, $a'=h(x')$ and
\begin{equation}\label{*wadis} 
  \frac{1}{M_{h}}|a-a'|=
   \frac{1}{M_{h}}|h(x)-h(x')| \leq |x-x'|=\dist(W_{a},W_{a'}).
\end{equation}
Therefore, recalling that $p$ is the dimension of our space, if 
\begin{equation}\label{*dist} 
 |a-a'|\geq M_{h} \frac{1}{(nm)^{k}}\sqrt p 
\end{equation}
then $W_{a}$ and $W_{a'}$ cannot cross the same level $k$ cube, since the distance between them exceeds the diameter of such a cube.

We have seen that $A$ intersects for small $\rho$ at least $(1-\rho^{1/10})n^k$ distinct intervals of length $1/(nm)^{k}$. Hence it contains a finite set ${\widetilde{A}}$ such that
\begin{equation}\label{*carda} 
 \# {\widetilde{A}} 
 \geq (1-\rho^{1/10})n^k  \frac{1}{2 M_{h}\sqrt p} 
 \end{equation} 
 and for any two different $a,a'\in {\widetilde{A}}$ the estimate \eqref{*dist} is satisfied. The factor $2$ in the denominator of \eqref{*carda} is taking care of the fact that neighboring intervals can have very close points, so for proper separation of points we need to estimate the number of intervals between the two intervals containing $a$ and $a'$.

    Thus
    \begin{equation} \label{eq:no_of_intersected_cubes1}
    N_k(g_{n, m, h}^{-1}(r)\cap F_J)\geq\frac{(1-\rho^{1/10})n^k \cdot (nm)^{k(s-1-\rho)}}{2M_h\sqrt p}\geq \frac{n^k \cdot (nm)^{k(s-1-\rho)}}{4M_h\sqrt p},
    \end{equation}
    where the second bound holds 
    since at the beginning we were selecting a $\rho=\rho_{l}$ such that it holds.
    
    Recalling that $\alpha_{n, m} = \frac{\log m}{\log n + \log m}$, we have
    \begin{equation} \label{eq:nm_alpha_identity}
        n=(nm)^{1-\alpha_{n, m}}.
    \end{equation}
    Plugging \eqref{eq:nm_alpha_identity} into \eqref{eq:no_of_intersected_cubes1}
    \begin{equation}\label{eq:no_of_intersected_cubes2}
        N_k(g_{n, m, h}^{-1}(r)\cap F_J)\geq\frac{1}{4M_h\sqrt p} (nm)^{k(s-\alpha_{n, m}-\rho)}.
    \end{equation}

    As $\lambda(H')\geq (1-\rho^{1/10})m^{-k}$, we have \eqref{eq:no_of_intersected_cubes2} for a set $R_\rho\subset J=[0, 1]$ of $r$s with 
    $\lambda(R_\rho)\geq 1-\rho^{1/10}$.

Until now $l$ was fixed. Now we recall that 
$\rho=\rho_l$ and $\rho_l\to 0+$ and $k_l\to\infty$ as $l\to\oo$. 
We find that almost every $r\in [0, 1]$ belongs to $
\cap_{L=1}^{\oo} \cup_{l\geq L}R_{\rho_l}$.
    Consequently,  for almost every $r \in J=[0, 1]$ there are infinitely many $\rho_{l}\leq 1/l$ and  $k_l\geq l$ for which
    $$N_{k_l}(g_{n, m, h}^{-1}(r)\cap F_J) \geq \frac{1}{4M_h \sqrt p} (nm)^{k_{l}(s-\alpha_{n, m}-\rho_l)}.$$
 Therefore, almost every level set has upper box dimension at least $s-\alpha_{n, m}$. 
\end{proof}

The following lemma will help us to show that certain sets are measurable:

\begin{lemma} \label{lemma:measurability1}
    Let $A, B, F\subseteq \mathbb{R}^p$ be bounded Borel sets and let $f:A\cup B\to \mathbb{R}$ be continuous.
    Then $\{u:\lambda(f(A\cap (F+u)))\setminus (B\cap (F+u))>0\}\subseteq \mathbb{R}^p$ is Lebesgue measurable.
\end{lemma}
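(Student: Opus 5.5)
We read the set in the statement as
$$U=\{u:\lambda\bigl(f(A\cap (F+u))\setminus f(B\cap (F+u))\bigr)>0\},$$
that is, the set of translations for which a positive measure of values is taken on $A$ but not on $B$. The plan is to realize $U$ as the set where a section-measure function is positive, and to get measurability of that function from descriptive set theory plus Fubini.

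\emph{Step 1: two analytic sets.} Put
$$\Gamma_A=\{(u,x,r)\in\mathbb{R}^p\times\mathbb{R}^p\times\mathbb{R}: x\in A,\ x-u\in F,\ f(x)=r\}.$$
Since $f$ is continuous on the Borel set $A\cup B$, its graph is Borel. The conditions $x\in A$ and $x-u\in F$ are preimages of Borel sets under continuous maps, so $\Gamma_A$ is Borel. Let $Z_A\subseteq \mathbb{R}^p\times\mathbb{R}$ be the projection of $\Gamma_A$ to the $(u,r)$ coordinates. Then $Z_A$ is analytic, and its $u$-section is exactly $f(A\cap(F+u))$. Define $Z_B$ analogously.

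\emph{Step 2: measurability of $D$.} Analytic sets are universally measurable, hence Lebesgue measurable in $\mathbb{R}^{p+1}$. Therefore $D=Z_A\setminus Z_B$ is Lebesgue measurable, with sections $D_u=f(A\cap(F+u))\setminus f(B\cap(F+u))$. Each section is a difference of two analytic subsets of $\mathbb{R}$, so it is Lebesgue measurable for \emph{every} $u$. Consequently $\Phi(u)=\lambda(D_u)$ is defined everywhere. Boundedness of $A$ and $f$ continuous on bounded sets is not even needed here; $\Phi$ may a priori take the value $\infty$, which is harmless.

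\emph{Step 3: measurability of $\Phi$.} By the Fubini--Tonelli theorem for the completed Lebesgue measure on $\mathbb{R}^{p}\times\mathbb{R}$, $\Phi$ agrees almost everywhere with a Lebesgue measurable function $\tilde\Phi$. Since Lebesgue measure on $\mathbb{R}^p$ is complete, $\Phi$ is itself Lebesgue measurable. Hence $U=\{\Phi>0\}$ is Lebesgue measurable.

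\emph{Main obstacle.} The delicate point is Step 3: with completed product measures, Fubini only controls sections for almost every $u$. This is why I want Step 2 to give measurability of every section separately, and then use completeness of $\lambda$ on $\mathbb{R}^p$ to pass from ``a.e.\ equal to a measurable function'' to ``measurable''.

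If one wants to avoid analytic sets, there is a fallback when $A$, $B$, $F$ are compact and $f$ is continuous on the compact set $A\cup B$. Then $Z_A$ and $Z_B$ are compact, being continuous images of compact sets, so $D$ is Borel. In that case plain Tonelli gives Borel measurability of $\Phi$ directly. The general Borel case could also be reduced to this one by inner regularity, but the analytic-set route seems cleanest.
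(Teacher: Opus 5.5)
Your proof is correct, and it takes a different route from the paper's. The paper builds the same analytic sets as your $Z_A,Z_B$, with $A$ replaced by $A\cup B$ in the first one. It then cites the result from Kechris that $u\mapsto\lambda(E^u)$ is upper semianalytic whenever $E$ is analytic. It applies this to $g(u)=\lambda(f((A\cup B)\cap(F+u)))$ and $h(u)=\lambda(f(B\cap(F+u)))$ and takes $\{g-h>0\}$; using $A\cup B$ is what makes $g-h$ equal the measure of the set difference.

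You instead take the difference $D=Z_A\setminus Z_B$ at the level of sets. You then need only Lusin's theorem (analytic sets are universally measurable), Tonelli for the completed product measure, and completeness of Lebesgue measure on $\mathbb{R}^p$. Your observation that every section $D_u$ is measurable is exactly what makes $\Phi$ defined everywhere, so the almost-everywhere information from Fubini suffices.

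What each route buys:
\begin{itemize}
\item Yours is more elementary, and it sidesteps a possible $\infty-\infty$ in $g-h$. Continuity on a bounded Borel set does not force $f$ to be bounded, although in the paper's application $f$ is piecewise affine on finitely many simplices, so this never arises there.
\item The paper's is a one-line citation and gives the finer descriptive information that $g$ and $h$ are upper semianalytic.
\end{itemize}

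Your closing suggestion that the general Borel case reduces to the compact one by inner regularity is not justified as stated. For instance, $A\cap(F+u)$ may be Lebesgue null, so inner approximation of $A$, $B$, $F$ in $\mathbb{R}^p$ gives no control over the images uniformly in $u$. Your main argument does not use this remark.
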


\begin{proof}
    For a Borel set $H$, let $E_H = \{(u, y): y\in f(H\cap(F+u))\}\subseteq \mathbb{R}^p\times \mathbb{R}$. This is analytic, 
    as the projection of the Borel set 
    $\{(u, y, x): x\in H, x-u\in F, f(x)=y\}$ to the first two coordinates. Let $H^u=\{y:(u, y)\in H\}\subseteq \mathbb{R}$. 
    The function $g(u)=\lambda(E_{A\cup B}^{u})$ is upper semianalytic  \cite[Exercise~29.28]{Kechris1995}, i.e.,
    the $g$-preimage of every half-line $(r, +\infty)$ is analytic and hence it is Lebesgue measurable. The same holds for $h(u)=\lambda(E_B^{u})$.
    Thus $\{u:g(u)-h(u)>0\}$ is also Lebesgue measurable, which is precisely the set in question.
\end{proof}

We need a further technical lemma:

\begin{lemma} \label{lemma:values_admitted_in_simplices}
    Let $F\subseteq \mathbb{R}^p$ be compact and $\mathcal{S}$ be a finite family of simplices covering $F$ such that $\inte (S) \cap F\neq \emptyset$ for all $S\in \mathcal{S}$.
    Assume moreover that $f:\bigcup_{S\in\mathcal{S}} S \to \mathbb{R}$ is a piecewise affine function, affine on members of $\mathcal{S}$. 
    Then for almost every $u\in\mathbb{R}^p$, we have
    \begin{equation} \label{eq:values_admitted_in_interior}
    \bigcup_{S\in\mathcal{S}} f(S\cap (F+u)) = \bigcup_{S\in\mathcal{S}} f\left(\inte S\cap (F+u)\right) \text{ a. e.}
    \end{equation}
    i.e., for almost every $u$ almost every value admitted on $F+u$ by $f$ is admitted in the interior of a simplex.
\end{lemma}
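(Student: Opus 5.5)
The inclusion $\supseteq$ is trivial, so everything hinges on the values admitted only on the boundaries of the simplices. Writing $\partial S = \bigcup_i \sigma_i$ as a finite union of $(p-1)$-dimensional faces, it suffices to show the following for each simplex $S$ and face $\sigma\subseteq \partial S$: for almost every $u$, the set
\[
f(\sigma\cap (F+u))\setminus \bigcup_{S'\in\mathcal{S}} f(\inte S'\cap (F+u))
\]
is Lebesgue null. Since the family is finite, a finite union of null sets of $u$ then gives \eqref{eq:values_admitted_in_interior}. Measurability of the relevant set of $u$'s follows from Lemma \ref{lemma:measurability1}, applied with $A=\sigma$ and $B=\bigcup_{S'}\inte S'$ (a bounded Borel set). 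Hence Fubini can be used freely on $(u,r)\in\mathbb{R}^p\times\mathbb{R}$.

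The plan is to apply Fubini to the set
\[
Z=\{(u,r): r\in f(\sigma\cap(F+u)),\ r\notin f(\inte S\cap (F+u))\}.
\]
I will show that for a.e.\ $r$ the $u$-section is null. Two cases arise.

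\emph{Case 1: $f|_S$ is constant $c$.} Then only the single value $r=c$ is affected, a null set of $r$'s, and there is nothing to prove.

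\emph{Case 2: $f|_S$ is non-constant affine.} Fix $r$. The level set $L_r=\{f|_S=r\}$ is a hyperplane piece $P_r\cap S$. The condition on $(u,r)$ requires $(F+u)$ to meet $\sigma\cap P_r$, which is at most $(p-2)$-dimensional unless $P_r\supseteq \operatorname{aff}\sigma$. The latter can happen only for a single $r$. At the same time, $F+u$ must avoid $P_r\cap \inte S$. So $u\in \sigma\cap P_r - F$, and we need to see that for a.e.\ such $u$, $F+u$ also hits $P_r\cap\inte S$. The naive argument says $\sigma\cap P_r-F$ has measure zero when $\dim F<2$, since $\dim(\sigma\cap P_r)+\dim F\le p-2+\dim F<p$. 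This is the easy case noted in the introduction. For $\dim F\ge 2$ this fails, and a genuine argument is needed.

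For the general case I would rather swap the roles: fix $u$ and vary the face. Each $r$ in the bad set corresponds to a point $x\in\sigma\cap(F+u)$ with $f(x)=r$. The key observation uses the hypothesis $\inte S\cap F\neq\emptyset$ together with compactness. Points of $(F+u)\cap \sigma$ that are not limits of $(F+u)\cap\inte S$ from within the level hyperplane are "exposed". I would parametrize translations as $u=u_\perp+ t\nu$, with $\nu$ a fixed direction transversal to $\operatorname{aff}\sigma$ pointing into $S$. For fixed $u_\perp$, consider the map $t\mapsto$ the bad set $Z_{u_\perp,t}$. Moving $t$ shifts $F$ across $\sigma$, and the bad values for distinct $t$ come from points of $F$ lying on distinct parallel translates of $\operatorname{aff}\sigma$. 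For a single fixed point $y\in F$ translated to $\sigma$, being bad at value $r$ forces the level-$r$ hyperplane slice of $F+u$ inside $S$ to be empty near $y+u$. I will then show that $\int \lambda(Z_{u_\perp,t})\,dt=0$ by a disjointness/counting argument: the sets $\{(t,r)\in Z\}$ project in $r$ with multiplicity bounded by the number of faces.

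This last step is the main obstacle, and I am not certain of the precise mechanism; the fallback is a transversality argument. If needed, I would first perturb so that $f$ is typically oriented on every simplex, which is harmless in the later application via Theorem \ref{prop:wenxi}. Then I would use the fact that almost every slice $F\cap(W+a)$, for $a$ in the projection, has positive dimension $\dim_H F-1>0$ and is therefore not contained in a $(p-2)$-plane. This would force the slice to enter $\inte S$ for a.e.\ translate.
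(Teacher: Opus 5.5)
\textbf{There is a genuine gap: the decisive step is never proved.} Your reduction is fine up to the point where the real work begins. For a.e.\ $r$ you must show that the set of $u$ for which $F+u$ meets $\sigma\cap P_r$ but misses $P_r\cap\inte S$ is null (or, in your swapped version, that for a.e.\ $u$ the bad values form a null set). You say yourself that you do not know the mechanism, and the one you suggest is not correct. If $r$ is bad at $u_\perp+t\nu$ with witness $y\in F$, then for slightly larger $t'$ the same $y$ does give a value attained in $\inte S$. But that value is $r+(t'-t)\langle\nabla f|_S,\nu\rangle$, not $r$. So there is no bounded multiplicity in $r$: if $\langle\nabla f|_S,\nu\rangle\neq 0$ and $F$ contains a segment parallel to $\nu$, a single value is bad for a whole interval of $t$'s.

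What is true is weaker. On each line $r-t\langle\nabla f|_S,\nu\rangle=\mathrm{const}$, every bad $t$ has a bad-free right neighbourhood, hence there are only countably many bad $t$'s per line, which would suffice via Fubini. This holds only when the witness lies in the relative interior of a face from which $\nu$ points into $\inte S$. So one $\nu$ per $(p-1)$-face is not enough, and $\partial S$ must be split into relative interiors of faces of all dimensions.

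The fallback fails. The lemma concerns an arbitrary compact $F$ and an arbitrary piecewise affine $f$, so Theorem \ref{prop:wenxi} and typical orientation are not available. In any case, a positive-dimensional slice of $F+u$ can touch $\partial S$ from outside without ever entering $\inte S$. Excluding such tangencies is a measure statement about $u$, not a dimension statement about slices. (A smaller point: Lemma \ref{lemma:measurability1} gives measurability of sets of $u$'s, not joint measurability of $Z$. The latter does hold, because $Z$ is the intersection of an analytic and a coanalytic set.)

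\textbf{The missing idea, which the paper uses, is a Lebesgue density argument in the translation parameter combined with the affinity of $f$ on $S$.}
\begin{enumerate}
\item Suppose the bad $u$'s formed a set $U$ of positive measure. Lemma \ref{lemma:measurability1} lets you shrink $U$ so that the following are uniform in $u\in U$: one simplex $S$; one face $L$ of dimension $\ge 1$; a part $L_r$ of $\mathrm{rel}\,\inte L$ at positive distance from its relative boundary; and one interval $[a,b]$ in which the values attained on $L_r\cap(F+u)$ but not on $\inte S\cap(F+u)$ have measure $>(1-\varepsilon)(b-a)$.
\item Move a density point of $U$ to $0$. Then $\lambda(f(\inte S\cap F)\cap[a,b])\le\varepsilon(b-a)$.
\item Choose $u\in U$ close to $0$ in a thin cone around a direction $\nu$ with $L_r-\delta\nu\subseteq\inte S$. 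Then $L_r\cap(F+u)$ is a translate of a subset of $\inte S\cap F$. Its $f$-values are therefore values of $f(\inte S\cap F)$ shifted by $t_u=\langle\nabla f|_S,u\rangle\to 0$.
\item Hence $\lambda(f(L_r\cap(F+u))\cap[a,b])\le\varepsilon(b-a)+2|t_u|$, contradicting the lower bound $(1-\varepsilon)(b-a)$.
\end{enumerate}
Your fixed-$r$ reduction could also have been finished without any dimension theory. Slicing $u$ along the normal of $P_r$ turns the claim into the nullity of $(Q-K)\setminus(\inte Q-K)$ for a convex body $Q$ and a compact $K$ in $\mathbb{R}^{p-1}$. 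Each point of that set is the apex of a small open cone contained in $\inte Q-K$, so none of its points is a density point, and the set is null.
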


\begin{proof}
    By Lemma \ref{lemma:measurability1}, the set of $u$s for which the identity of \eqref{eq:values_admitted_in_interior} fails is measurable.
    Proceeding towards a contradiction, suppose that a measurable subset $U$ of these $u$s is of positive Lebesgue measure.
    For such $u$, there is some $S_u\in \mathcal{S}$ for which a positive measure of values is admitted by $f$ in 
    $\partial S \cap (F+u)$ but not in $\inte S\cap (F+u)$. 
    Call such values $f$ \emph{degenerate values} for $F+u$. (As in this proof $f$ is fixed, we will simply speak of degenerate values below.) 
    We will pass to a subset of $U$ in a number of steps to achieve more convenient properties so that $\lambda(U)>0$ is preserved. 
    First, as $\mathcal{S}$ is finite, $S_u$ might be made independent of the choice of $u\in U$. (The set of those $u$s in $U$ for which
    for which the failure of the identity of \eqref{eq:values_admitted_in_interior} is witnessed by a given $S$ is again measurable by \Cref{lemma:measurability1}.)
    We denote this particular simplex by $S$. Note that
    $\partial S$ is the union of the relative interiors of all of its faces (including ones with non-maximal dimension),
    $$\partial S = \bigcup_{L\text{ is a face of }S} \mathrm{rel}\ \inte L.$$
    As there are finitely many such faces, we can assume that for each $u\in U$, degenerate values of a set of positive measure are admitted
    in the relative interior of a fixed, not necessarily $p-1$ dimensional, but at least 1-dimensional face $L$ of $S$, measurability
    is still preserved due to \Cref{lemma:measurability1}.
    Exhausting this relative interior with a countable subfamily of $(L_{r})_{r>0}$ defined by
    $$L_{r} = \{x\in L: \mathrm{dist}(x, \partial_{\mathrm{rel}}(L))>r\},$$
    we can assume that for each $u\in U$, degenerate values of a set of positive measure are admitted in some $L_r$.

    Fix such $r$ and denote the set of degenerate values admitted in $L_r$ by $R_u\subseteq f(L_r)$.
    Each $R_u$ is a positive measure set in $f(L_r)$, thus fixing $\varepsilon>0$, by Lebesgue's density theorem, 
    we can find a non-degenerate interval $[a_u, b_u]$ with rational endpoints such that $\lambda(R_u\cap [a, b]) > (1-\varepsilon)(b-a)$.
    As there are countably many such intervals, by further reducing $U$ we can even assume that for a 
    fixed non-degenerate interval $[a, b] \subseteq f(L_r)$, we have for every $u\in U$ that
    \begin{equation} \label{eq:in_[a,b]_mostly_boundary_values}
    \lambda(R_u\cap [a, b]) > (1-\varepsilon)(b-a).
    \end{equation}

    Pick a Lebesgue density point of $U$, by translating the entire configuration, we can assume that it is $0\in \mathbb{R}^p$. Then by $0\in U$ and \eqref{eq:in_[a,b]_mostly_boundary_values} we have
    \begin{equation} \label{eq:u=0_many_boundary_points}
    \lambda (f(\inte S \cap F)\cap [a,b])\leq \varepsilon(b-a).
    \end{equation}
    Denote by $\nu$ a unit vector such that for small enough $\delta>0$, we have $L_r-\delta \nu \subseteq \inte S$. 
    As $0$ is a density point of $U$, we can find $u_n \in U$ such that $u_n\to 0$ and $\frac{u_n}{\|u_n\|}\to \nu$. Picking $u=u_n$ with large $n$ to be fixed later,
    $L_r-u \subseteq \inte S$ is guaranteed. Consequently, we have the plain containments
    \begin{equation} \label{eq:containments_HFU}
    L_r\cap (F+u) = ((L_r-u)\cap F) + u \subseteq (\inte S \cap F) + u.
    \end{equation}
    Thus
    \begin{equation} \label{eq:equal_intersections_equal_sizes}
    \lambda(f(L_r\cap (F+u))\cap[a, b]) = \lambda(f(((L_r-u)\cap F)+u)\cap[a, b]).
    \end{equation}
    By $u\in U$, we know
    \begin{equation} \label{eq:boundary_large_intersection}
    \lambda(f(L_r\cap (F+u))\cap [a, b])> (1-\varepsilon)(b-a).
    \end{equation}
    On the other hand, as $f$ is affine on $S\supseteq (L_r-u)\cap F$, the $f$-image of $((L_r-u)\cap F) + u \subseteq S$ is simply the $f$-image of $(L_r-u)\cap F$ translated by a scalar $t_u$
    tending to 0 as $u\to 0$. Consequently,
    \begin{align} \label{eq:intersection_sizes_long_calc}
        \lambda(f(L_r\cap (F+u))\cap[a, b]) &\overset{\mathclap{\eqref{eq:equal_intersections_equal_sizes}}}{=}\ \lambda(f(((L_r-u)\cap F)+u)\cap[a, b]) \notag \\
        &\overset{\mathclap{\text{$t_u$ def.}}}{=}\ \lambda(f(((L_r-u)\cap F))\cap[a+t_u, b+t_u]) \notag \\
        &\overset{\mathclap{\eqref{eq:containments_HFU}}}{\leq }\ \lambda(f(\inte S \cap F)\cap [a+t_u, b+t_u]) \notag \\
        &\leq \lambda(f(\inte S \cap F)\cap [a-|t_u|, b+|t_u|]) \\
        &= \lambda(f(\inte S \cap F)\cap [a-|t_u|, a]) + \lambda(f(\inte S \cap F)\cap [a, b]) \notag \\ 
        &\quad+ \lambda(f(\inte S \cap F)\cap [b, b+|t_u|]) \notag\\
        &\overset{\mathclap{\eqref{eq:u=0_many_boundary_points}}}{\leq}\ \varepsilon(b-a)+2|t_u|. \notag
    \end{align}
    Comparing the first and last expressions of \eqref{eq:intersection_sizes_long_calc} and invoking the lower bound on the former given by \eqref{eq:boundary_large_intersection}, we arrive at
    $$(1-\varepsilon)(b-a) < \varepsilon(b-a)+2|t_u|,$$
    a contradiction for $\varepsilon<1/2$ and small enough $t_u$, guaranteed by large enough $n$.
\end{proof}

\section{Proof of the main result} \label{sec:main}

\begin{proof}[Proof of Theorem \ref{thm:gen_upper_box_precise}]


    Corollary \ref{corollary:lower_box_dim_upper_bound} and the subsequent remark yield that the quantity $s-\alpha$ provides an upper
    bound on $\overline{\dim}_B f^{-1}(r)$ for every $f\in C_1^{\alpha}(F)$ 
    and almost every $r\in f(F)$.
    Thus it remains to show that it is a lower bound as well for the generic $f$ and almost every $r\in f(F)$. In light of Corollary \ref{cor:connected_self_similar_is_locally_connected}, we know that $F$
    is locally connected, thus Corollary \ref{corollary:upper_box_generic_locally_connected} is applicable. Hence for $\varepsilon>0$, the to-be-shown existence of a dense subset $\mathcal{F}_\varepsilon$ of $C_1^{\alpha}(F)$ 
    functions such that for $f\in \mathcal{F}_\varepsilon$ and almost every $r\in f(F)$
    $$\overline{\dim}_B f^{-1}(r)\geq s-(\alpha+\varepsilon)$$
    implies the same for almost every level set of the generic function in $C_1^{\alpha}(F)$. Then intersecting for $\varepsilon\to 0$ over a countable sequence, we find that for the generic function $f$
    and almost every $r\in f(F)$ we have
    $$\overline{\dim}_B f^{-1}(r)\geq s-\alpha,$$
    which would conclude the proof. Thus it indeed suffices to establish the existence of $\mathcal{F}_\varepsilon$ for every $\varepsilon>0$ 
    with the aforementioned properties.
    First we will make a convenient dense subset $\mathcal{F}\subseteq C_1^{\alpha}(F)$. 
    (Note that we do not even use Theorem \ref{thm:upper_box_generic_locally_connected} to show
    that there is a generic value of $D_{\overline{B}*}^f(F)$, we get it directly from this argument.)

    Denote by $PWA(\mathbb{R}^p)$ the set of
    $1^{-}$-Hölder-$\alpha$, locally non-constant piecewise affine functions on $\mathbb{R}^p$, and by $PWA(F)$ the set of their
    restrictions to $F$.
    By Lemma \ref{lemma:piecewise_affine_approx}, $PWA(F)\subseteq C_1^{\alpha}(F)$ is dense. 
    Starting from $PWA(F)$, we will do a number of perturbations to arrive at the dense family $\mathcal{F}$. Some of these perturbation
    steps closely follow the proof of \cite[Theorem~3.10]{BUCZOLICH2024531}, however, as new ideas are needed at certain stages, we provide
    a fully detailed presentation here.

    \begin{enumerate}
        \item First, we pass to a subset of $PWA(F)$ containing typically oriented functions in the following strong sense: 
        we say that $f\in PWATO(F)$ (piecewise affine, typically oriented) if it is in $PWA(F)$ and once we put $h_1, h_2, \dots$ to be the countably many affine 
        functions defining it and extend them affinely to the full space $\mathbb{R}^p$, then each of them is typically oriented in each $F'\subseteq F$
        which is a similar piece of $F$ (obtained as the image of $F$ using a finite chain
        of the similitudes defining $F$). Having such functions will be convenient once we want to use Lemma \ref{lemma:approx_affine_functions}
        in small similar pieces of $F$.

        We claim that $PWATO(F)$ is still dense in $C_1^{\alpha}(F)$. To prove this, recall first
        that according to Proposition \ref{prop:wenxi}, for any $F'$ and $S\in \mathcal{S}$, $F'\subseteq S$, 
        we can find some subset $\mathcal{N}_{F'}
        $  in the Grassmannian $Gr(p, p-1)$ of zero
        Grassmannian measure, $\gamma_{p, p-1}$
        such that for any $W \notin \mathcal{N}_{F'}$, and almost every $a\in Pr_{W^{\perp}}(F')$, we have that 
        $\dim_H (F'\cap (W+a))=s-1$.

        Given $f\in PWA(F)$, define $h_1, h_2, \dots$ as above We will consider the perturbation 
        $\tilde{f}(x)=f(x) + \langle t, x\rangle$ for fixed $t\in \mathbb{R}^p$, where $\langle \cdot, \cdot \rangle$ denotes inner/scalar product. 
        For any $h=h_i$, $\tilde{h}(x)=h(x) + \langle t, x\rangle$.
        Now for any $r\in h(F')$, 
        $\tilde{h}^{-1}(r)\cap F' = (W_{t, h} + \tau) \cap F'$ for some $\tau \in Pr_{W^{\perp}}(F'\cap S)$ and $W_{t, h}\in Gr(p, p-1)$ depending on $t$ and $h$.
        We say that $t$ is $(F', h)$-bad, if $W_{t, h}\in \mathcal{N}_{F'}$, i.e., if $\tilde{h}$ is not typically oriented in $F'$. 
        We claim that the set of $(F', h)$-bad $t$s is of zero Lebesgue measure.
        This will be sufficient: using a small enough $t$ which is not $(F', h)$-bad for any $F', h=h_i$ results in $\tilde{f}\in PWATO(F)$, and
        this implies the density of $PWATO(F)$.

        We can determine $W_{t, h}$ explicitly: if $h(x)=\langle \alpha_h, x\rangle + c_h$
        for some $\alpha_h \in \mathbb{R}^p$ and $c_h\in \mathbb{R}$, then $W_{t, h}$ is the hyperplane orthogonal to $\alpha_h + t$. Thus 
        $\alpha_h + t$ is $(F', h)$-bad if and only if $\frac{\alpha_h + t}{\|\alpha_h + t\|_{2}}\in S^{n-1}$ is a unit normal vector of some
        $L\in \mathcal{N}_{F'}$. Denote the set of such unit normal vectors by $E_{\mathcal{N}_{F'}}$. 
        As $\mathcal{N}_{F'}$ has zero  $\gamma_{p, p-1}$ measure
        on $Gr(p, p-1)$, $E_{\mathcal{N}_{F'}}\subseteq \mathbb{S}^{p-1}$ has zero spherical measure.
        That is, we
        need that $\frac{\alpha_h + t}{\|\alpha_h + t\|_{2}}$ evades the null-set $E_{\mathcal{N}_{F'}}$ for almost every $t$. 
        After embedding $\mathbb{S}^{p-1}$ into $\mathbb{R}^p$, we see that this
        evasion happens if and only if $\alpha_h + t$ evades the $p$-dimensional Lebesgue null-set 
        $\bigcup_{\lambda>0}\lambda E_{\mathcal{N}_{F'}} \subseteq \mathbb{R}^{p}$. (This set is a null-set due
        to Fubini's theorem.)
        That is, the set of $(F', h)$-bad $t$s is of zero Lebesgue measure, as we claimed. 
        After taking countable union over all $F'$ and $h_1, h_2, \dots$, we find that there exist $t$s with arbitrarily small norm which are
        not $(F', h)$-bad for any choice of $F'$ and $h=h_i$, implying the density of $PWATO(F)\subseteq C_1^{\alpha}(F)$.

        We take note of the fact that if $f\in PWATO(F)$, then $\tilde{f}(x)=f(x+c)$ is also in $PWATO(F)$. In the next two steps,
        we will only use such shifting perturbations, which cannot lead out of $PWATO(F)$.

        \item Now we achieve that our simplices intersect $F$ substantially. Notably, for $f\in PWATO(F)$, if $\mathcal{S}$
        is the family of simplices used in its definition, we say that $S\in\mathcal{S}$ is {\it non-boundary} 
        for $F$, if either $\inte S \cap F \neq \emptyset$, or $S\cap F = \emptyset$. 
        We say that $f\in PWATO(F)$ belongs to $PWANB(F)$, if each $S\in\mathcal{S}$ is {\it non-boundary} 
        for $F$. We claim that $PWANB(F)$ is still a dense subset of $C_1^{\alpha}(F)$.

        To prove this claim, we show that one can select an arbitrarily small $c\in\mathbb{R}^p$, such
        that all the simplices $\tilde{\mathcal{S}}= \bigcup_{S\in\mathcal{S}}\{S-c\}$ are non-boundary for $F$.
        Then the function $\tilde{f}(x) = f(x+c)$ belongs to $PWATO(F)$.
        As $f$ is uniformly continuous, the existence of arbitrarily small such $c$s would imply the denseness of $PWANB(F)$ 
        due to the denseness of $PWATO(F)$.

        Take finite $\mathcal{S}_0\subseteq \mathcal{S}$ such that $F\subseteq \inte\left(\bigcup \mathcal{S}_0\right)$.
        Fix $S\in \mathcal{S}_0$ and denote by $A_S$ the set of $c$s for which $S-c$ is a non-boundary simplex. Then $A_S$ is clearly open. 
        Moreover, if $c\notin A_S$, then $S-c$ contains some $x\in F$ on its boundary,
        which yields the existence of some $c'\in A_S$ arbitrarily close to $c$, ($c'=c+v$ is satisfying for any $v$ which points into $S$ from $x$). 
        Thus $A_S$ is dense. However, then $A=\bigcap_{S\in \mathcal{S}_0}A_S$
        is still dense and open, and using any $c\in A$ yields $\tilde{f}\in PWANB(F)$. Thus $PWANB(F)$ is dense indeed.

        We take note of the fact that once $f\in PWANB(F)$, $\tilde{f}(x)=f(x+c)$ is also in $PWANB(F)$ for small enough $c$.

        \item Now we invoke Lemma \ref{lemma:values_admitted_in_simplices}
        to see that for a dense subset $\mathcal{F}\subseteq PWANB(F)$, for every $f\in \mathcal{F}$
        almost every value of the range of $f$ is admitted in $\inte S \cap F$ for some $S\in\mathcal{S}$. Indeed, that lemma guarantees
        a dense set of translations of the set $F$ with this property, which can be equivalently formulated to guarantee a dense set of translations 
        of $f$. In this dense set of translations, we can find arbitrarily small ones which guarantee that the shifted function is still in $PWANB(F)$.
    \end{enumerate}
    
    We use $\mathcal{F}$
    as a starting point to construct $\mathcal{F}_\varepsilon$. More precisely, we use the auxiliary 
    functions $\phi_{n, m}$ as in Lemma \ref{lemma:approx_affine_functions} to perturb $f\in\mathcal{F}$ for $n, m$ such that 
    $\alpha+\frac{\varepsilon}{2}<\alpha_{n, m}<\alpha + \varepsilon$.
    Such $n,m$ exists for arbitrarily large $m$. For some  $K_{\varepsilon, f}>0$ to be fixed later, we call such $n,m$ $(\varepsilon, f)$-permitted, if even $m>K_{\varepsilon, f}$ holds.
    For any $f\in \mathcal{F}$, put $g_{n, m, f}(x) = \phi_{n, m}(f(x))$, and let
    $$\mathcal{F}_\varepsilon = \{g_{n, m, f}: f\in\mathcal{F}, n,m \text{ are } (\varepsilon,f)\text{-permitted}\}.$$

    As $\mathcal{F}$ was dense in $C_1^{\alpha}(F)$, and $\|\phi_{n, m} - \Id_{\mathbb{R}}\|_{\infty}\leq \frac{1}{m}$, and there are $(\varepsilon, f)$-permitted pairs $n,m$ with arbitrarily large $m$,
    we readily see that $\mathcal{F}_\varepsilon$ is dense as well. 
    
    It is also simple to check that any $g_{n, m, f}\in \mathcal{F}_\varepsilon$ is 1-Hölder-$\alpha$. To this end, fix
    $x,y \in F$. As $f\in C_{1-}^{\alpha}(F)$, we can choose $c<1$ such that $f\in C_c^{\alpha}(F)$. Then the change of $g_{n, m, f}$ between $x, y$ is
    \begin{equation}\label{eq:change_of_gnm}
    \Delta_{x,y} :=|g_{n, m, f}(x) - g_{n, m, f}(y)| = |\phi_{n, m}(f(x)) - \phi_{n, m}(f(y))|.
    \end{equation}
    We give two bounds on this quantity, and it will turn out that they cover any choice of $x,y$ for large enough $K_{\varepsilon, f}$. First of all, as  
    $\|\phi_{n, m} - \Id_{\mathbb{R}}\|_{\infty}\leq \frac{1}{m}$, using $m>K_{\varepsilon, f}$ and \eqref{eq:change_of_gnm} we see that 
    \begin{equation} \label{large_diff_bound}
    \Delta_{x, y}\leq |f(x) - f(y)| + \frac{2}{K_{\varepsilon,f}} \leq c|x-y|^{\alpha} + \frac{2}{K_{\varepsilon,f}}.
    \end{equation}
    On the other hand, as $f$ is $M_f$-Lipschitz and $\phi_{n,m}$ is 1-Hölder-$\alpha_{n, m}$ in any interval of length 1,
    whenever $M_f|x-y|<1$ we have
    \begin{equation} \label{small_diff_bound}
        \Delta_{x, y}\leq (M_f|x-y|)^{\alpha_{n, m}} \leq (M_f|x-y|)^{\alpha+\frac{\varepsilon}{2}}.
    \end{equation}

    Now we only have to note that the bound in \eqref{small_diff_bound} implies $\Delta_{x, y}\leq |x-y|^\alpha$, if $|x-y|$ is small enough compared to $M_f$, while the bound in \eqref{large_diff_bound}
    implies the same if $|x-y|$ is large enough compared to $\frac{1}{K_{\varepsilon,f}}$. Thus we can fix $K_{\varepsilon,f}$ large enough so that \eqref{large_diff_bound} handles 
    any pair $x,y$ not already handled by \eqref{small_diff_bound}. This choice of $K_{\varepsilon, f}$ yields that $\mathcal{F}_\varepsilon$ is indeed a dense subset of $C_1^{\alpha}(F)$.

    Now it suffices to check the dimensions of the level sets, i.e., that they exceed $s-(\alpha+\varepsilon)$.
    To this end, observe that $f$ admits almost every value of its range in the interior of some $S$. This implies the same for $g_{n, m, f}$:
    to show this, let $A\subseteq \mathbb{R}$ be the nullset of values admitted by $f$ 
    on the boundaries of the simplices but not in their interiors.
    Recall that $\phi_{n, m}$ is Lebesgue measure preserving, that is for every measurable set $B$ we have $\lll(\phi_{n, m}^{-1}(B))=\lll(B)$.
    Let $B\subseteq \mathbb{R}$ be the set of values admitted by $g_{n,m,f}$ 
    on the boundaries of the simplices but not in their interiors. We need to show that $\lll(B)=0$.

    Proceeding towards a contradiction suppose that $\lll(B)>0$. Then $\lll(\phi_{n, m}^{-1}(B))>0$
    holds as well. This implies that there is $r\in \phi_{n, m}^{-1}(B) \sm A$. That is 
    $\phi_{n, m}(r)=g_{n,m,f}(x)=\phi_{n, m}(f(x))$ for a suitable $x\in F$, but there is no $y$ belonging to the interior of some $S$, 
    satisfying $\phi_{n, m}(f(y))=\phi_{n, m}(f(x)).$ Therefore, there is no $y$ belonging to the interior of some $S$ satisfying $f(y)=f(x)=r$, 
    but this means that $r\in A$, a contradiction.

    Thus we found that almost every value of $g_{n, m, f}$ 
    is admitted in some point in the interior of some $S$, which point in turn is
    contained by a similar image $F_1\subset F$ of $F$ fully in $S$. Thus Lemma \ref{lemma:approx_affine_functions} can be applied.

\end{proof}

\section{Concluding remarks} \label{sec:concluding}

We would like to highlight two interesting open problems:

\begin{question}
    How Theorem \ref{thm:gen_upper_box_precise} generalizes to $s<1$?
\end{question}

\begin{question} 
    Is $F$ having nice connection type equivalent to $F$ being locally connected?
\end{question}

\bibliographystyle{amsplain} 
\bibliography{sierbox} 

\end{document}